\newtheorem{lemma}{Lemma}[section]
\newtheorem{proposition}[lemma]{Proposition}
\newtheorem{theorem}[lemma]{Theorem}
\newtheorem{corollary}[lemma]{Corollary}
\newtheorem{example}[lemma]{Example}
\newtheorem{remark}[lemma]{Remark}
\newcommand{\CC}{\mathbb C}
\newcommand{\FF}{\mathbb F}
\newcommand{\PP}{\mathbb P}
\newcommand{\QQ}{\mathbb Q}
\newcommand{\RR}{\mathbb R}
\newcommand{\ZZ}{\mathbb Z}
\newcommand{\cA}{\mathcal A}
\newcommand{\cD}{\mathcal D}
\newcommand{\cE}{\mathcal E}
\newcommand{\cF}{\mathcal F}
\newcommand{\cH}{\mathcal H}
\newcommand{\cO}{\mathcal O}
\newcommand{\cS}{\mathcal S}
\newcommand{\To}{\longrightarrow}
\newcommand{\Mapsto}{\mapstochar\longrightarrow}
\newcommand{\tensor}{\otimes}
\renewcommand{\Tilde}{\widetilde}
\renewcommand{\Bar}{\overline}
\newcommand{\cross}{\times}
\newcommand{\imic}{\cong}
\newcommand{\splitextn}{\rtimes}
\newcommand{\Eins}{{\mathbf 1}}
\newcommand{\sfrac}[2]{{\textstyle{\frac{#1}{#2}}}}
\newcommand{\SL}{\mathop{\mathrm {SL}}\nolimits}
\newcommand{\SO}{\mathop{\mathrm {SO}}\nolimits}
\newcommand{\Sp}{\mathop{\mathrm {Sp}}\nolimits}
\newcommand{\Orth}{\mathop{\null\mathrm {O}}\nolimits}
\newcommand{\rank}{\mathop{\mathrm {rank}}\nolimits}
\newcommand{\sign}{\mathop{\mathrm {sign}}\nolimits}
\newcommand{\Hom}{\mathop{\mathrm {Hom}}\nolimits}
\newcommand{\Aut}{\mathop{\mathrm {Aut}}\nolimits}
\newcommand{\id}{\mathop{\mathrm {id}}\nolimits}
\newcommand{\sn}{\mathop{\mathrm {sn}}\nolimits}
\newcommand{\ord}{\mathop{\null\mathrm {ord}}\nolimits}
\newcommand{\latt}[1]{{\langle{#1}\rangle}}
\renewcommand{\div}{\mathop{\mathrm {div}}\nolimits}
\newcommand{\Kthree}{\mathop{\mathrm {K3}}\nolimits}
\newcommand{\comm}[1]{{[{#1},{#1}]}}
\newcommand{\abel}[1]{{{#1}\null^{\mathop{\mathrm {ab}}\nolimits}}}
\newcommand{\group}[1]{{\langle{#1}\rangle}}
\newcommand{\mcomm}[1]{{[\![{#1}]\!]}}
\newcommand{\onto}{\twoheadrightarrow}
\newcommand{\qedsymbol}{\mbox{$\Box$}}
\newcommand{\qed}{\unskip\nobreak\hfil\penalty50\hskip1em\hbox{}\nobreak
\hfill\qedsymbol\parfillskip=0pt\finalhyphendemerits=0}
\newenvironment{proof}{\begin{ProofwCaption}{Proof}}{\end{ProofwCaption}}
\newenvironment{ProofwCaption}[1]
 {\addvspace\theorempreskipamount \noindent{\it #1.}\rm}
 {\qed \par \addvspace\theorempostskipamount}
\begin{document}

\title{Abelianisation of orthogonal groups and the fundamental group of modular
varieties}
\author{V.~Gritsenko, K.~Hulek and G.K.~Sankaran}
\maketitle
\begin{abstract}
\noindent We study the commutator subgroup of integral orthogonal groups
belonging to indefinite quadratic forms. We show that the index of
this commutator is~$2$ for many groups that occur in the construction
of moduli spaces in algebraic geometry, in particular the moduli of
$\Kthree$ surfaces. We give applications to modular forms and to
computing the fundamental groups of some moduli spaces.
\end{abstract}

\noindent Many moduli spaces in algebraic geometry can be described via period
domains as quotients of a symmetric space by a discrete group, or
modular group. We shall be concerned with the case of the symmetric
space $\cD_L$ associated with a lattice $L$ of signature $(2,n)$, and
discrete subgroups of the orthogonal group $\Orth(L)$ that act on
$\cD_L$. Such groups arise in the study of the moduli of $\Kthree$
surfaces and of other irreducible symplectic manifolds
(see~\cite{GHSK3}, \cite{GHSsympl} and the references there), and of
polarised abelian surfaces. Orthogonal groups of indefinite forms also
appear elsewhere in geometry, for instance in the theory of
singularities (see~\cite{Br}, \cite{Eb}). In this paper we study
the commutator subgroups and abelianisations of orthogonal modular
groups of this kind, especially for lattices of signature $(2,n)$.
\smallskip

\noindent
\emph{Notation.} For definitions and notation concerning locally
symmetric varieties and toroidal compactification we refer
to~\cite{GHSorth}.

We write $\latt{X}$ for the group generated by a subset $X$ of some
group. If $n$ is an integer $\latt{n}$ means the rank-$1$ lattice
generated by an element of square~$n$.

For a group $G$, we
write $\comm{G}$ for the commutator subgroup (derived subgroup) of $G$
(not $G'$ because we want to keep the notation $\Orth'(L)$ from
\cite{Kn1}) and we use $\abel{G}$ for the abelianisation, i.e.\ the
quotient $G/\,\comm{G}$, which is also the group $\Hom(G,\CC^\cross)$
of characters of $G$.
\medskip

The commutator subgroup and the abelianisation of a modular group
carry important information about modular forms.  For example the fact
that $\abel{\SL_2(\ZZ)}\imic \ZZ/12\ZZ$ reflects the existence of the
Dedekind $\eta$-function.  Its square $\eta(\tau)^2$ is a modular form
with respect to $\SL_2(\ZZ)$ with a character of order $12$. However,
the commutator subgroup of orthogonal modular groups is generally not
known. We are aware of two previous studies.

In \cite{GH1} two of us analysed the commutator of the
\emph{paramodular group} $\Gamma_t$ (the integral symplectic group of
a symplectic form with elementary divisors $(1,t)$), which is the
modular group of the moduli space $\cA_t$ of polarised abelian
surfaces with a polarisation of type $(1,t)$.  According to
\cite[Theorem 2.1]{GH1}
\begin{equation}\label{sp-commut}
\abel{\Gamma_t}\imic (\ZZ/(t,12)\ZZ)\times (\ZZ/(2t,12)\ZZ).
\end{equation}
We note that $\Gamma_1=\Sp_4(\ZZ)$. The projectivised paramodular
group $\Gamma_t/\{\pm\Eins\}$ is isomorphic to the stable special orthogonal group
$\Tilde\SO^+(\Lambda_{2t})$
(see ~\eqref{defineorth} below), associated with the lattice
$\Lambda_{2t}=2U\oplus\latt{-2t}$
where $U$ is the hyperbolic plane.
Therefore ~\eqref{sp-commut} is a result about orthogonal groups of signature
$(2,3)$.

S.~Kondo (\cite[Main theorem]{Ko1}) considered the lattice $L_{2d}$ of
signature $(2,19)$ associated with moduli of polarised $\Kthree$
surfaces of degree~$2d$. He proved that the abelianisation of the
modular group $\Tilde\Orth^+(L_{2d})$ is an elementary abelian
$2$-group whose order divides~$8$. We show in Theorem~\ref{com2} that
this group is in fact of order~$2$.  Moreover a similar result is true
for a large class of orthogonal groups that appear in the theory of
moduli spaces.

The paper is organised as follows. In
Section~\ref{section:commutators} we make some basic definitions,
state our main results and give some examples. We prove some of the
results straight away as corollaries of a theorem of
Kneser. Section~\ref{section:vanishing} gives an application to the
theory of modular forms, showing that in many cases of interest the
order of vanishing of a modular form at a cusp is necessarily an
integer. In Section~\ref{section:JacobiEichler} we describe  the
Eichler transvections, which are special unipotent elements of the
orthogonal groups we are interested in, and the Jacobi group, and use
them to obtain suitable generators for the modular
groups. Section~\ref{section:strongapprox} is mainly devoted to
the proof of Theorem~\ref{com2} but also includes some remarks about
the number field case. We conclude in
Section~\ref{section:fundamental} with some applications to
fundamental groups of moduli spaces. In particular we show that the
compactified moduli spaces of polarised $\Kthree$ surfaces and of
polarised abelian surfaces are simply-connected.
\bigskip

\noindent{\it Acknowledgements:\/} The first and the second author are
grateful to the Max-Planck-Institut f\"ur Mathematik in Bonn for
support and for providing excellent working conditions.

\section{Commutator subgroups}\label{section:commutators}

In this section $L$ is always an integral even lattice, i.e.\ a free
$\ZZ$-module with a non-degenerate bilinear form $(\cdot\,,\cdot)\colon
L\times L\to \ZZ $ such that $(u,u)=u^2\in 2\ZZ$ for any $u\in L$.
The dual lattice
$$
L^\vee=\{v\in L\otimes \QQ \mid (v,l)\in \ZZ\ \ \forall\, l\in L\}
$$ 
contains $L$.  We denote the \emph{discriminant group} of $L$ by
$D(L)=L^{\vee}/L$. It carries a quadratic forms with values in
$\QQ/2\ZZ$. The \emph{stable orthogonal group} $\Tilde\Orth(L)$ is
defined as the kernel
$$
\Tilde{\Orth}(L) = \ker (\Orth (L) \to \Orth(D(L)))
$$
of the natural projection to the finite orthogonal group $\Orth(D(L))$.

For an indefinite lattice there are two ways to choose
the real spinor norm because $\Orth(L,b)=\Orth(L,-b)$
where $b$ is the bilinear form on $L$.
We note that  the
different spinor norms agree on $\SO(L\otimes \QQ)$.
For any field $K$ different from $\FF_2$, any $g\in \Orth(L\otimes K)$
can be represented as the product of reflections
$g=\sigma_{v_1}\sigma_{v_2}\dots\sigma_{v_m}$, where $v_i\in L\otimes
K$.  We define the \emph{spinor norm over $K$} as follows (see
\cite{KnS}):
$$
\sn_K(g)=(-\frac{(v_1,v_1)}2)\cdot \ldots \cdot
(-\frac{(v_m,v_m)}2)(K^{\times})^2.
$$
Thus $\sn_K\colon \Orth(L\otimes K)\to K^\cross/(K^\cross)^2$ is a
group homomorphism.
We have made this choice $-(\ ,\ )$ in the definition of $\sn$
because it is convenient for the geometric applications when the
lattices have signature $(2,n)$. In that case, reflection with respect
to a vector with negative norm fixes the connected component of the
homogeneous domains.

We define three subgroups of $\Orth(L)$:
\begin{eqnarray}\label{defineorth}
\Orth^+(L)&=&\Orth (L)\cap \ker \sn_\RR,\nonumber\\
\Tilde{\Orth}^+(L)&=&\Tilde{\Orth}(L)\cap \Orth^+(L),\\
\Orth'(L)&=&\SO(L)\cap \ker \sn_\QQ.\nonumber
\end{eqnarray}
$\Orth'(L)$ is sometimes called the \emph{spinorial kernel}. We also
use the notation $\SO^+(L)=\Orth^+(L)\cap\SO(L)$ and
$\Tilde{\SO}^+(L)=\Tilde\Orth^+(L)\cap\SO(L)$; but $\Orth'(L)$ is
already a subgroup of $\SO(L)$.

If $a\in L$ and $a^2=-2$ then $a$ is called a \emph{$(-2)$-vector} or
\emph{root}. The reflection
$$
\sigma_a\colon v \To v-\tfrac{2(a,v)}{(a,a)}\,a
$$
determined by $a$ belongs to $\Tilde{\Orth}^+(L)$.

The \emph{Witt index} of $L$ over a field $K$ is the maximal dimension
of a totally isotropic subspace of $L\otimes K$.  For any prime $p$
the \emph{$p$-rank} of $L$, denoted by $\rank_p(L)$, is the maximal
rank of the sublattices $M$ in $L$ such that $\det(M)$ is coprime
to~$p$.  By the \emph{integral hyperbolic plane} we mean the lattice
$$
U:=\ZZ e\oplus \ZZ f
\qquad\text{where }\ (e,e)=(f,f)=0, \ (e,f)=1.
$$

The following result of Kneser is very important for us. It allows us
to use reflections to generate certain orthogonal groups over the
integers, not just over a field.
\begin{theorem}(\cite[Satz~4]{Kn1})\label{thKn}
Let $L$ be an even integral lattice of Witt index $\ge 2$ over
$\RR$.  We assume that $L$ represents $-2$ and that $\rank_3(L)\ge 5$
and $\rank_2(L)\ge 6$. Then $\Orth'(L)$ is generated by the products of
reflections $\sigma_a\sigma_b$ where $a,b\in L$ and $a^2=b^2=-2$.
\end{theorem}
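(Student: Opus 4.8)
The plan is to compare $\Orth'(L)$ with the subgroup $\Sigma\le\Orth(L)$ generated by all products $\sigma_a\sigma_b$ of reflections in roots, and to prove the two inclusions separately, the non‑trivial one by a descent argument built on Eichler transvections. The inclusion $\Sigma\subseteq\Orth'(L)$ requires only a spinor‑norm computation: if $a^2=-2$ then $\sn_\QQ(\sigma_a)=\bigl(-\tfrac{(a,a)}2\bigr)(\QQ^\cross)^2=(\QQ^\cross)^2$, so $\sigma_a\in\ker\sn_\QQ$, while $\det(\sigma_a\sigma_b)=1$; hence $\sigma_a\sigma_b\in\SO(L)\cap\ker\sn_\QQ=\Orth'(L)$. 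All the work is in the reverse inclusion $\Orth'(L)\subseteq\Sigma$.

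The first ingredient is that products of root‑reflections already produce Eichler transvections. Given a primitive isotropic vector $e\in L$ and a root $r\in e^\perp$, the vector $r+e$ is again a root since $(r+e)^2=r^2=-2$, and a direct computation gives
$$
\sigma_{r+e}\,\sigma_r\colon\ v\longmapsto v+(v,e)\,r-(v,r)\,e+(v,e)\,e,
$$
which is precisely the Eichler transvection $E_{e,r}$. Because transvections in a fixed isotropic direction obey $E_{e,a}E_{e,b}=E_{e,a+b}$ and $E_{e,a}^{-1}=E_{e,-a}$ and depend only on $a$ modulo $\ZZ e$, it follows that $\Sigma$ contains $E_{e,a}$ for every $a$ in the subgroup of $(e^\perp\cap L)/\ZZ e$ spanned by its roots, and this for every primitive isotropic $e$ of $L$.

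The main step is then to show that these transvections, together with the root‑reflection pairs themselves, generate all of $\Orth'(L)$. This is the arithmetic core of the theorem and the place where the hypotheses are used. Since $L$ is indefinite of rank $\ge5$ it is isotropic over $\QQ$ by Hasse--Minkowski, so primitive isotropic vectors exist; the conditions $\rank_3(L)\ge5$ and $\rank_2(L)\ge6$ say, via the classification of $\ZZ_p$‑lattices, that $L\otimes\ZZ_p$ is sufficiently isotropic at the two ``bad'' finite primes $p=2,3$ (one can split off hyperbolic planes there), while the Witt‑index hypothesis handles the archimedean place. One then argues by induction on the rank: given $g\in\Orth'(L)$, use the local conditions to find transvections and a product of root‑reflections whose product with $g$ fixes a primitive isotropic $e$ and then a whole hyperbolic plane $U\ni e$; restrict the problem to $U^\perp\cap L$ and apply the inductive hypothesis, while tracking determinants and spinor norms so that every reduction stays inside $\Orth'$ and not merely inside $\Orth^+$. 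Combining with the first inclusion yields $\Orth'(L)=\Sigma$.

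The main obstacle is this last step: converting the local statements (isotropy at $2$ and $3$ encoded in the rank hypotheses, indefiniteness at $\infty$) into a generation statement over $\ZZ$ by an explicit descent — morally a strong‑approximation fact for $\mathrm{Spin}(L)$, but here one wants it effectively and with the generators being pairs of \emph{root}‑reflections. The delicate points are: checking that at each stage of the induction the isotropic vector produced is primitive and, locally at $2$ and $3$, splits off a hyperbolic plane, so that enough transvections are available; ensuring that the lattices occurring en route still ``represent $-2$'' so that the manoeuvres of the previous paragraph apply to them; and arranging the inductive hypothesis so that the numerical conditions propagate (or so that one lands in a manageable base case). Once the bookkeeping is set up, the rest is routine linear algebra over $\ZZ$.
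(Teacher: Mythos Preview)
The paper does not prove this theorem at all: it is quoted verbatim as Kneser's result \cite[Satz~4]{Kn1} and used as a black box throughout. So there is no ``paper's own proof'' to compare against.

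As for your sketch on its own terms: the easy inclusion $\Sigma\subseteq\Orth'(L)$ and the identification of root--transvections $t(e,r)=\sigma_{r+e}\sigma_r$ as products of two root reflections are both correct (the latter is essentially the paper's formula~\eqref{t6} specialised to $(a,a)=-2$). Your outline of the hard direction --- local analysis at $p=2,3$ to guarantee enough hyperbolic splitting, then a descent/induction on rank assembled via strong approximation for $\mathrm{Spin}$ --- is indeed the shape of Kneser's argument in~\cite{Kn1}: he first proves a local generation statement (his Satz~2, that $\Tilde\Orth(L\otimes\ZZ_p)$ is generated by reflections in $(-2)$-vectors under the rank hypotheses) and then globalises. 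But what you have written is an outline, not a proof: the paragraph beginning ``One then argues by induction on the rank'' asserts exactly the content of the theorem without supplying the mechanism. The genuinely hard parts --- why $\rank_2\ge 6$ (and not $5$) is needed, why $\rank_3\ge 5$ suffices, how the induction hypothesis is maintained after peeling off a hyperbolic plane, and how the local root--reflection generation is lifted to $\ZZ$ --- are named but not addressed. To turn this into a proof you would have to reproduce the local classification arguments of~\cite{Kn1}, \S\S2--4.
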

We shall say that a lattice \emph{satisfies the Kneser conditions} if
it satisfies the conditions of Theorem~\ref{thKn}.
\begin{corollary}\label{comparespinors}
If $L$ satisfies the Kneser conditions, then
$$
\Orth'(L)=\Tilde\SO^+(L).
$$
\end{corollary}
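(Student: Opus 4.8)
The statement is the conjunction of the two inclusions $\Orth'(L)\subseteq\Tilde\SO^+(L)$ and $\Tilde\SO^+(L)\subseteq\Orth'(L)$, and only the first uses the Kneser conditions; the second is a general fact about stable orthogonal groups. For the first inclusion the plan is to quote Theorem~\ref{thKn} verbatim: it identifies $\Orth'(L)$ with the subgroup generated by the products $\sigma_a\sigma_b$ with $a,b\in L$ and $a^2=b^2=-2$. Each $\sigma_a$ has determinant $-1$ and, by the remark following the definition of a root, lies in $\Tilde\Orth^+(L)$; hence $\sigma_a\sigma_b\in\Tilde\Orth^+(L)\cap\SO(L)=\Tilde\SO^+(L)$, and since such products generate $\Orth'(L)$ the inclusion follows. (Note that $\sn_\QQ(\sigma_a\sigma_b)=(-\tfrac{a^2}{2})(-\tfrac{b^2}{2})=1$ with the chosen sign convention, so these generators automatically lie in $\ker\sn_\QQ$, which is why Kneser's generators sit inside $\Orth'(L)$ in the first place.)

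For the reverse inclusion, fix $g\in\Tilde\SO^+(L)$; since $g\in\SO(L)$ by definition, it suffices to show that $\sn_\QQ(g)$ is trivial in $\QQ^\cross/(\QQ^\cross)^2$, and the plan is to do this place by place. At the real place, $g\in\Orth^+(L)=\Orth(L)\cap\ker\sn_\RR$, so $\sn_\QQ(g)$ is represented by a positive rational. At a finite prime $p$, the image of $g$ in $\Orth(L\otimes\ZZ_p)$ acts trivially on the discriminant group $D(L\otimes\ZZ_p)$, i.e.\ lies in $\Tilde\Orth(L\otimes\ZZ_p)$; and the spinor norm of the discriminant kernel of a $\ZZ_p$-lattice is contained in $\ZZ_p^\cross(\QQ_p^\cross)^2$, so $\ord_p\sn_\QQ(g)$ is even. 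Running this over all $p$, the squarefree integral representative of $\sn_\QQ(g)$ has no prime factor, so it is $\pm1$, and positivity forces it to be $1$. Hence $g\in\ker\sn_\QQ$, so $g\in\Orth'(L)$.

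The one substantive ingredient is the local statement that the spinor norm is $p$-adically integral modulo squares on the discriminant kernel $\Tilde\Orth(L\otimes\ZZ_p)$. For odd $p$ this is a short computation from the Jordan splitting of $L\otimes\ZZ_p$: triviality on $D(L\otimes\ZZ_p)$ rules out reflections supported on the nontrivially $p$-scaled Jordan components, so the spinor norm is computed from unit vectors alone and lands in $\ZZ_p^\cross$. The dyadic prime $p=2$ is, as usual, the main obstacle: there the even-lattice hypothesis must be used, and I would either carry out the corresponding more delicate analysis of the $2$-adic Jordan decomposition or simply invoke the classical computation of spinor norms of integral orthogonal groups. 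It is worth recording explicitly that this second inclusion requires no hypothesis on $L$ whatsoever — the Witt-index and $p$-rank conditions enter the proof only through Theorem~\ref{thKn}.
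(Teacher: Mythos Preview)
Your argument for the inclusion $\Orth'(L)\subseteq\Tilde\SO^+(L)$ is correct and is exactly what the paper does.

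For the reverse inclusion your overall local--global strategy agrees with the paper's, but the local input you use is different, and your version has a gap. The paper does \emph{not} treat this direction as hypothesis-free: it invokes \cite[Satz~2]{Kn1}, which (under the Kneser conditions) says that $\Tilde\Orth(L\otimes\ZZ_p)$ is generated by reflections in $(-2)$-vectors for every finite~$p$. Since each such reflection has spinor norm $-\tfrac{-2}{2}=1$ in the chosen convention, this gives $\sn_{\QQ_p}(g)=1$ outright, and then the product formula (Hasse principle for squares) forces $\sn_\QQ(g)=1$. So the Kneser conditions are used in both inclusions in the paper's proof, contrary to your final remark.

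Your alternative is to prove only $\sn_{\QQ_p}(g)\in\ZZ_p^\cross(\QQ_p^\cross)^2$, which would indeed suffice and would make the inclusion unconditional. But your odd-$p$ sketch does not work as stated: saying that ``triviality on $D(L\otimes\ZZ_p)$ rules out reflections supported on the nontrivially $p$-scaled Jordan components'' presumes that an element of the discriminant kernel decomposes as a product of reflections each of which already lies in the discriminant kernel, and that is not automatic. One needs a genuine structural result about $\Tilde\Orth(L\otimes\ZZ_p)$ here, and at $p=2$ you explicitly defer to an unspecified reference. The cleanest such structural result available is precisely Kneser's Satz~2, which brings the hypotheses back in. If you want to pursue the unconditional version you should cite or prove a precise statement to the effect that the spinor norm of the local discriminant kernel is always $p$-integral modulo squares; this is true for even lattices but is not the one-line Jordan-splitting argument you outline.
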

\begin{proof}
According to Theorem~\ref{thKn}, $\Orth'(L)$ is a subgroup of
$\Tilde\SO^{+}(L)$.  But by \cite[Satz~2]{Kn1}, the local orthogonal
group $\Tilde\Orth(L\otimes \ZZ_p)$ is generated by reflections with
respect to $(-2)$-vectors for every finite prime~$p$. Therefore for
any $g\in \Tilde\SO(L)$ we have
$\sn_{\QQ_v}(g)=1\in{\QQ_v^\cross}/(\QQ_v^\cross)^2$ for every
$p$-adic valuation~$v$ on~$\QQ$.
Therefore for any $g\in \Tilde\SO^+(L)$
$\sn_{\QQ}(g)=\sn_\RR(g)\cdot\prod_p\sn_{\QQ_p}(g)=1$.
\end{proof}

Note that Corollary~\ref{comparespinors} is also true with opposite
signs, i.e.\ if $L$ contains at least one $2$-vector and we consider
the reflections $\sigma_a$ with $a^2=2$.  To see this, simply multiply
the quadratic form of the lattice $L$ by $-1$.
\smallskip

Our first result on the commutator is a corollary of Kneser's theorem.
We consider the group $\Tilde\Orth^+(L)$, which is the main group in
the geometric applications we shall give later.

\begin{theorem}\label{com1}
Let $L$ be a lattice which satisfies the Kneser conditions. Then
$\abel{\Tilde\Orth^+(L)}$ (resp. $\abel{\Tilde\SO^+(L)}$)
is an abelian $2$-group. Its order divides $2^N$ (resp. $2^{N-1}$),
where $N$ is the number of different $\Tilde\Orth^+(L)$-orbits
(resp. $\Tilde\SO^+(L)$-orbits) of $(-2)$-vectors
in~$L$.
\end{theorem}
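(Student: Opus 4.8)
The strategy is to produce reflection generators, dispose of the orthogonal case in one line, and reduce the special orthogonal case to a connectivity statement about $(-2)$-vectors. Since $L$ satisfies the Kneser conditions it represents $-2$, so it contains a root $e$; then $\sigma_e\in\Tilde\Orth^+(L)$ has determinant $-1$, so $[\Tilde\Orth^+(L):\Tilde\SO^+(L)]=2$. By Corollary~\ref{comparespinors}, $\Tilde\SO^+(L)=\Orth'(L)$ is generated by the products $\sigma_a\sigma_b$ with $a^2=b^2=-2$, and hence $\Tilde\Orth^+(L)$ is generated by the reflections $\sigma_a$ in roots $a\in L$. Writing $[g]$ for the image of $g$ in the relevant abelianisation, $\abel{\Tilde\Orth^+(L)}$ is then generated by the classes $[\sigma_a]$; each has order dividing~$2$ because $\sigma_a^2=1$, and two roots in the same $\Tilde\Orth^+(L)$-orbit give conjugate, hence equal, classes. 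So $\abel{\Tilde\Orth^+(L)}$ is a quotient of $(\ZZ/2\ZZ)^N$, which proves the first assertion.

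For the special orthogonal case put $A=\abel{\Tilde\SO^+(L)}$. From $\sigma_b^2=1$ one gets in $A$ the cocycle relation $[\sigma_a\sigma_b]+[\sigma_b\sigma_c]=[\sigma_a\sigma_c]$; fixing $e$ and setting $\nu(a):=[\sigma_a\sigma_e]$ we obtain $[\sigma_a\sigma_b]=\nu(a)-\nu(b)$, so $A$ is generated by $\{\nu(a)\mid a^2=-2\}$ and $\nu(e)=0$. I would then prove two geometric facts for any $L$ satisfying the Kneser conditions: first, that any two roots in the same $\Tilde\SO^+(L)$-orbit can be joined by a chain of roots $a=a_0,\dots,a_m=a'$ with $(a_l,a_{l+1})=0$ for all $l$, and \emph{also} by such a chain with $(a_l,a_{l+1})=\pm 1$ for all $l$; and second, that representatives of any two $\Tilde\SO^+(L)$-orbits of roots can be chosen orthogonal to one another.

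Granting these, the proof finishes as follows. For $a,a'$ in one orbit and a chain as above, the cocycle relation gives $\nu(a)-\nu(a')=\sum_l[\sigma_{a_l}\sigma_{a_{l+1}}]$. Along the orthogonal chain each summand is killed by $2$, since $\sigma_{a_l}$ and $\sigma_{a_{l+1}}$ commute and their product is an involution; along the other chain each summand is killed by $3$, since when $a_l^2=a_{l+1}^2=-2$ and $(a_l,a_{l+1})=\pm1$ the two reflections generate a Weyl group of type $A_2$, so $\sigma_{a_l}\sigma_{a_{l+1}}$ has order~$3$ on the (nondegenerate) plane they span and fixes its orthogonal complement. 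Hence $\nu(a)-\nu(a')$ is annihilated by $2$ and by $3$, so it vanishes: $\nu$ is constant on $\Tilde\SO^+(L)$-orbits. Choosing orbit representatives $v_1=e,v_2,\dots,v_N$ we get $A=\langle\nu(v_2),\dots,\nu(v_N)\rangle$. Finally, for each $i\ge 2$ choose by the second fact a root $a$ in the orbit of $e$ and a root $b$ in the orbit of $v_i$ with $a\perp b$; then $\nu(v_i)=\nu(b)=\nu(b)-\nu(a)=[\sigma_b\sigma_a]$ (using constancy of $\nu$ and $\nu(e)=0$), and since $\sigma_a$ and $\sigma_b$ commute this class has order dividing~$2$. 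Therefore $A$ is an elementary abelian $2$-group of rank at most $N-1$, so it is an abelian $2$-group of order dividing $2^{N-1}$.

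The main obstacle is the two geometric facts of the second paragraph; everything else is formal. The Kneser conditions force Witt index $\ge 2$ over $\RR$ and large $2$- and $3$-ranks, so $L$ contains two orthogonal hyperbolic planes, and I would reduce to a sublattice of the form $2U\oplus L_0$ and use Eichler's transitivity theorem there, building the chains out of the Eichler transvections of Section~\ref{section:JacobiEichler}. The genuinely delicate point is interpolating a single-reflection step $a\mapsto\sigma_c(a)$ with $|(c,a)|\notin\{0,1\}$ (a ``unipotent'' or ``hyperbolic'' step) by a sequence of orthogonal steps, and separately by a sequence of $A_2$-steps; this is exactly where the hypotheses on the ranks are used, and it is the part I expect to require the most care.
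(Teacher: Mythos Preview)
Your treatment of $\Tilde\Orth^+(L)$ is correct and essentially identical to the paper's: reflections generate, conjugate reflections give equal classes, and squares vanish.

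For $\Tilde\SO^+(L)$, however, you have taken a far more elaborate route than the paper, and the part you yourself flag as ``the main obstacle'' --- the two geometric connectivity statements about roots --- is left unproved. These are not formalities: producing, for an \emph{arbitrary} lattice satisfying only the Kneser conditions, both an orthogonal chain and an $A_2$-chain between any two roots in the same orbit, and orthogonal representatives of any two distinct orbits, would require substantial work (and you have not indicated how the rank hypotheses would actually be used to interpolate a step $a\mapsto\sigma_c(a)$ with $|(c,a)|\ge 2$). So as it stands the proposal has a genuine gap.

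More importantly, the gap is unnecessary. The paper's proof of the $\Tilde\SO^+$ case is literally the sentence ``Exactly the same argument works for $\Tilde\SO^+(L)$.'' The point you are missing is the swap identity
\[
\sigma_u\sigma_v=\sigma_{\sigma_u(v)}\sigma_u,
\]
which is an equality in the group, not merely in the abelianisation. Using it one rearranges any product $\sigma_{c_1}\cdots\sigma_{c_{2k}}$ so that reflections in the same orbit become adjacent; then for $b=g(a)$ one has $\sigma_a\sigma_b=\sigma_a g\sigma_a g^{-1}=[\sigma_a,g]$, a commutator. Squaring a product $\sigma_{a_1}\cdots\sigma_{a_n}$ with the $a_i$ in distinct orbits and rearranging in the same way yields a product of such commutators, giving $2$-torsion and the bound $2^{N-1}$ directly. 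No chains of orthogonal or $A_2$-adjacent roots are needed, and no orthogonal orbit representatives are needed.

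In short: your $\nu$-formalism is fine, but the geometric lemmas you defer to are both unproved and avoidable; the paper's purely group-theoretic rearrangement via the swap identity is shorter and does the whole job.
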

\begin{proof}
For roots $a,b\in L$ we write $a\equiv b\mod \Tilde\Orth^+(L)$ if there
exists $g\in \Tilde\Orth^+(L)$ such that $g(a)=b$.  In this case
$\sigma_{g(a)}=g\sigma_ag^{-1}$ and $\sigma_a\sigma_b\in
\comm{\Tilde\Orth^+(L)}$.  By Theorem~\ref{thKn}, any element of
$\Tilde\SO^+(L)$ is a product of reflections by $(-2)$-vectors, and
since $L$ represents $-2$ the same is true for
$\Tilde\Orth^+(L)$. Using this, and the evident property
$\sigma_u\sigma_v=\sigma_{\sigma_u(v)}\sigma_u$, we can rewrite any
class modulo commutator as the class of a product $\sigma_{a_1}\dots
\sigma_{a_n}$, where the $(-2)$-vectors $a_i$ all belong to different
$\Tilde\Orth^+(L)$-orbits. The square of such a class can be written
as the class of a product of elements $\sigma_{b_i}\sigma_{c_i}$ where
$b_i\equiv c_i\mod \Tilde\Orth^+(L)$, so it belongs to the
commutator. Exactly the same argument works for $\Tilde\SO^+(L)$.
\end{proof}

Let us remark that if $L=2U\oplus L_0$ and $L_0$ contains a sublattice
isomorphic to $A_2$ then $L$ satisfies the Kneser conditions.

We do not know exactly how far the conditions on $\rank_{2}(L)$ and
$\rank_{3}(L)$ are necessary in Theorem~\ref{com1}. For
$\Tilde\SO^+(L)$ they cannot be weakened much, as the following examples show.

\begin{example}\label{example:2rank}
Take $t\not\equiv 0\bmod 3$ and take $L= \Lambda_{2t}=2U\oplus
\latt{-2t}$. Then $\Lambda_{2t}$ satisfies all the Kneser conditions
except that $\rank_2(\Lambda_{2t})=4$. In this case
$\abel{\Tilde\SO^+(\Lambda_{2t})}$ contains a subgroup isomorphic to
$\ZZ/4\ZZ$ if $t$ is even.
\end{example}

In this case $\Tilde\SO^+(\Lambda_{2t})$ is isomorphic to the projective
paramodular group $\Gamma_t/\{\pm \Eins_4\}$ (see \cite{GH2}). Hence
the $4$-torsion element appears because of
equation~\eqref{sp-commut}. If $3|t$, then there is also
$3$-torsion (and the Kneser condition on $\rank_3(\Lambda_{2t})$ fails also).

However, we do not know an example where the conclusion of
Theorem~\ref{com1} fails and the Kneser conditions fail only because
$\rank_2(L)=5$.
\begin{example}\label{example:3rank}
Take $L=2U\oplus A_2(-3)$. Then $L$ satisfies all the Kneser
conditions except that $\rank_3(L)=4$. In this case
$\abel{\Tilde\SO^+(L)}$ contains a subgroup isomorphic to $\ZZ/3\ZZ$.
\end{example}
In fact, Desreumaux has constructed a modular form with respect to
$\Tilde\SO^+(L)$ with a character of order~$3$ (see~\cite{De}).

\begin{proposition}\label{unimodular}
Let $L=2U\oplus L_1$ be an even unimodular lattice of rank at least
$6$. Then $\abel{\Tilde{\SO}^+(L)}$ is trivial and
$\abel{\Tilde\Orth^+(L)}\imic \ZZ/2\ZZ$.
\end{proposition}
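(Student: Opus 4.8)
The plan is to apply Theorem~\ref{com1} and reduce the whole statement to counting orbits of $(-2)$-vectors. Since $L$ is unimodular, $D(L)=0$, so $\Tilde\Orth(L)=\Orth(L)$ and in particular $\Tilde\Orth^+(L)=\Orth^+(L)$ and $\Tilde\SO^+(L)=\SO^+(L)$. First I would check that $L$ satisfies the Kneser conditions: it contains $2U$, so its Witt index over $\RR$ is at least $2$ and it represents $-2$ (for instance $e-f$ has square $-2$); and since $\det L=\pm1$ is coprime to every prime, $\rank_p(L)=\rank L\ge 6$ for every $p$, so in particular $\rank_3(L)\ge 5$ and $\rank_2(L)\ge 6$. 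Hence Theorem~\ref{com1} applies and tells us that $\abel{\Orth^+(L)}$ and $\abel{\SO^+(L)}$ are abelian $2$-groups whose orders divide $2^{N}$, respectively $2^{N-1}$, where $N$ is the number of orbits of $(-2)$-vectors under $\Orth^+(L)$, respectively under $\SO^+(L)$.

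The main point is to prove that $N=1$ in both cases. I would invoke Eichler's criterion (see~\cite{GHSorth}): because $L$ contains $2U$, the $\Tilde\Orth^+(L)$-orbit of a primitive vector $l$ is determined by $l^2$ together with its image in $D(L)$. A $(-2)$-vector is primitive, and $D(L)=0$, so all $(-2)$-vectors form a single $\Tilde\Orth^+(L)=\Orth^+(L)$-orbit; thus $N=1$ for $\Orth^+(L)$. To see that this is also a single $\SO^+(L)$-orbit it is enough to exhibit, for one $(-2)$-vector $a$, an element of $\Orth^+(L)$ of determinant $-1$ fixing $a$. By the transitivity just proved we may take $a=e_1-f_1$, where $U_1=\ZZ e_1\oplus\ZZ f_1$ is the first copy of $U$ in $2U$; then the second copy $U_2$ lies in $a^\perp$, so $c=e_2-f_2$ is a $(-2)$-vector orthogonal to $a$, and $\sigma_c$ fixes $a$, has determinant $-1$ and belongs to $\Tilde\Orth^+(L)=\Orth^+(L)$. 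Since $\Orth^+(L)=\SO^+(L)\sqcup\SO^+(L)\sigma_c$ and $\sigma_c(a)=a$, the $\Orth^+(L)$- and $\SO^+(L)$-orbits of $a$ coincide, so $N=1$ for $\SO^+(L)$ as well.

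Feeding $N=1$ back into Theorem~\ref{com1} gives that $\abel{\SO^+(L)}$ has order dividing $2^{0}=1$, hence is trivial, and that $\abel{\Orth^+(L)}$ has order dividing $2$. To finish I would show $\abel{\Orth^+(L)}$ is nontrivial by producing a nontrivial character: the determinant $\det\colon\Orth^+(L)\to\{\pm1\}$ is onto, because $L$ represents $-2$ and so contains a $(-2)$-vector $c$, for which $\sigma_c\in\Orth^+(L)$ has determinant $-1$. Thus $\abel{\Orth^+(L)}$ surjects onto $\ZZ/2\ZZ$, and combined with the divisibility bound this forces $\abel{\Orth^+(L)}\imic\ZZ/2\ZZ$.

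The only genuinely delicate step is the orbit count, for which one must quote Eichler's transitivity theorem in the form valid for lattices containing $2U$ and then add the short determinant argument above to pass from $\Orth^+$-orbits to $\SO^+$-orbits; verifying the Kneser conditions and the surjectivity of $\det$ is routine. One could alternatively avoid Eichler by using Corollary~\ref{comparespinors} to identify $\SO^+(L)$ with $\Orth'(L)$ and arguing transitivity on roots directly, but the route above is shorter.
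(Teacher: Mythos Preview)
Your proof is correct and takes essentially the same approach as the paper: verify the Kneser conditions, use the Eichler criterion to see there is a single orbit of $(-2)$-vectors, and conclude via Theorem~\ref{com1}. One minor difference is that the paper invokes Eichler's criterion in the form of Proposition~\ref{ET}(i), which already produces the transitivity element inside $E_U(L_1)\subset\Tilde\SO^+(L)$, so your separate $\sigma_c$ argument for passing from $\Orth^+$-orbits to $\SO^+$-orbits is unnecessary (though still correct).
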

\begin{proof}
  $L$ satisfies the Kneser conditions so $\Tilde\SO^+(L)=\SO^+(L)$ is
  generated by products $\sigma_a\sigma_b$ with $a^2=b^2=-2$. The
  orbit of a $(-2)$-vector $a$ is determined by its image in the
  discriminant group (this is a case of the Eichler criterion,
  from~\cite[\S 10]{Ei}: see Proposition~\ref{ET}(i), below).  But that
  group is trivial. Therefore there exists $g\in \SO^+(L)$ such that
  $g(a)=b$. But then
  $\sigma_a\sigma_b=\sigma_a\sigma_{g(a)}=\sigma_ag\sigma_ag^{-1}$ is
  a commutator.
\end{proof}

The lattices that appear in the theory of the moduli spaces of
symplectic varieties frequently contain two integral hyperbolic planes
even if they are not unimodular.  In the case of polarised $\Kthree$
surfaces of degree $2d$ the lattice
\begin{equation}\label{L2d}
L_{2d}=2U\oplus 2E_8(-1)\oplus \latt{-2d}
\end{equation}
occurs.

In the main theorem of \cite{Ko1} it was proved that the order
of $\abel{\Tilde\Orth(L_{2d})}$ divides~$16$ or equivalently that
the order of
$\abel{\Tilde\Orth^+(L_{2d})}$ divides~$8$.
But  there are at most  two $\Tilde\Orth^+(L_{2d})$-orbits of
$(-2)$-vectors in $L_{2d}$ \cite[Proposition~2.4(ii)]{GHSorth}.
Hence by Theorem~\ref{com1} the order of $\abel{\Tilde\Orth^+(L_{2d})}$
divides~$4$. (There are two orbits  if and only if  $d\not\equiv 1$
mod~$4$.)

But in fact the order is~$2$, for any~$d$. This is a
special case of the following, which is our main theorem in this
paper.

\begin{theorem}\label{com2}
Let $L$ be an even integral lattice containing at least
two hyperbolic planes, such that $\rank_2(L)\ge 6$ and
$\rank_3(L)\ge 5$.  Then $\abel{\Tilde{\SO}^+(L)}$ is trivial and
$\abel{\Tilde\Orth^+(L)}\imic \ZZ/2\ZZ$.
\end{theorem}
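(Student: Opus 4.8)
The plan is to reduce the statement to Kneser's theorem combined with a single orbit-collapsing statement. By Theorem~\ref{com1}, since $L$ contains two hyperbolic planes it automatically has Witt index $\ge 2$ over $\RR$ and represents $-2$ (take a root inside one of the hyperbolic planes together with a suitable vector, or more directly note $U\oplus U$ already contains roots), so the Kneser conditions hold and $\abel{\Tilde\SO^+(L)}$ is an abelian $2$-group whose order divides $2^{N-1}$, where $N$ is the number of $\Tilde\SO^+(L)$-orbits of $(-2)$-vectors. Hence it suffices to show that \emph{all} roots of $L$ lie in a single $\Tilde\SO^+(L)$-orbit. This is the Eichler criterion: writing $L=U\oplus U\oplus L_0$, Proposition~\ref{ET}(i) (the Eichler criterion, \cite[\S10]{Ei}) says that the $\Tilde\SO^+(L)$-orbit of a primitive vector $v$ is determined by $v^2$ and by the image $v+L\in D(L)$. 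For a $(-2)$-vector $v$ the value $v^2=-2$ is fixed, so I must show the image in $D(L)$ can be normalised to $0$; concretely, given any root $v$ there is $g\in\Tilde\SO^+(L)$ with $g(v)$ equal to a fixed root contained in $U\oplus U$ (which maps to $0$ in $D(L)$). Granting this, $N=1$, so $\abel{\Tilde\SO^+(L)}$ divides $2^0=1$ and is trivial.

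For the orthogonal rather than special orthogonal statement, I would argue as follows. Since $L$ represents $-2$, the group $\Tilde\Orth^+(L)$ is generated by reflections $\sigma_a$ in roots, and by the argument in the proof of Theorem~\ref{com1} every class modulo the commutator is represented by $\sigma_{a_1}\cdots\sigma_{a_k}$ with the $a_i$ in distinct $\Tilde\Orth^+(L)$-orbits; but there is only one such orbit (it contains the $\Tilde\SO^+(L)$-orbit, which is already everything), so every class modulo the commutator is represented by $1$ or by a single reflection $\sigma_a$. Thus $\abel{\Tilde\Orth^+(L)}$ is a quotient of $\ZZ/2\ZZ$. To see it is exactly $\ZZ/2\ZZ$, I need a nontrivial character: the spinor norm $\sn_\QQ$, or equivalently (by Corollary~\ref{comparespinors}) the determinant composed with the identification $\Tilde\Orth^+(L)/\Tilde\SO^+(L)\cong\{\pm1\}$, gives a surjection $\Tilde\Orth^+(L)\to\ZZ/2\ZZ$; alternatively one checks $\det\colon\Tilde\Orth^+(L)\to\{\pm1\}$ is onto because $L$ contains a root, and it kills commutators, so it factors through the abelianisation nontrivially. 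Hence $\abel{\Tilde\Orth^+(L)}\imic\ZZ/2\ZZ$.

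The one real obstacle is the single-orbit claim, i.e.\ checking that the Eichler criterion applies in this generality and that the image of an arbitrary root in $D(L)$ is killed by Eichler transvections living in $\Tilde\SO^+(L)$. The subtlety is that Eichler transvections are defined using an isotropic vector, so having two hyperbolic planes is exactly what is needed to produce enough of them: with $L=U\oplus U\oplus L_0$ one uses transvections $E_{e,a}$ attached to an isotropic $e$ in one hyperbolic plane and $a\in e^\perp$ to move any root into $U\oplus U$, and one must verify these transvections act trivially on $D(L)$ (so lie in $\Tilde\SO^+(L)$) and that the group they generate, together with $\Tilde\SO^+$ of the other summand, acts transitively on roots of each fixed discriminant value — in our case value $0$, but one also needs roots of nonzero discriminant value either not to exist or to be reachable, which is where the rank hypotheses $\rank_2(L)\ge6$, $\rank_3(L)\ge5$ re-enter. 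I expect the clean way to organise this is to first prove the Eichler-criterion statement as a standalone proposition (this is presumably Proposition~\ref{ET}, proved later in the paper using the Jacobi group and Eichler transvections of Section~\ref{section:JacobiEichler}), and then deduce Theorem~\ref{com2} in two lines as above.
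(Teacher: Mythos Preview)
Your approach has a genuine gap that cannot be repaired along the lines you suggest. The heart of your argument is the claim that all $(-2)$-vectors of $L$ lie in a single $\Tilde\SO^+(L)$-orbit. This is false in general. The Eichler criterion (Proposition~\ref{ET}(i)) says precisely that the orbit of a primitive vector is determined by the pair $(v^2,\,v^*\bmod L)$; since $\Tilde\SO^+(L)$ by definition acts trivially on $D(L)$, the class $v^*\bmod L$ is an \emph{invariant} of the orbit, and no amount of Eichler transvecting can change it. A root $v$ has $\div(v)\in\{1,2\}$, and if $\div(v)=2$ then $v^*=v/2$ has nonzero image in $D(L)$. Such roots do occur: for the $\Kthree$ lattice $L_{2d}$ the paper explicitly notes (just before the statement of Theorem~\ref{com2}) that there are two orbits of $(-2)$-vectors for infinitely many $d$, so that Theorem~\ref{com1} gives only $|\abel{\Tilde\SO^+(L_{2d})}|\le 2$, not $1$. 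Your hope that ``the rank hypotheses $\rank_2(L)\ge 6$, $\rank_3(L)\ge 5$ re-enter'' to force a single orbit is misplaced: these conditions are needed for Kneser's generation theorem, not for orbit structure, and they cannot collapse orbits distinguished by a group-theoretic invariant.

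The paper's actual proof is of a completely different nature. It does not count orbits at all. Instead it uses the positive solution of the congruence subgroup problem for $\Orth'(L)$ (Kneser~\cite{Kn2}) to know that $\comm{E(L)}$, being an infinite noncentral normal subgroup, contains a principal congruence subgroup of some level $m$. Then, by explicit local computations with Eichler transvections and the auxiliary elements $P(s)=\sigma_{e-f}\sigma_{e-sf}$, it shows that every generator $t(e,u)$, $t(f,v)$ of $E(L)=\Tilde\SO^+(L)$ is a product of commutators in $E(L\otimes\ZZ_p)$ for each prime $p\mid m$ (with separate arguments for $p=2$ and $p=3$, which is where the rank hypotheses are used locally). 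Strong approximation then transports these local commutator expressions to global ones modulo the congruence subgroup, hence into $\comm{E(L)}$. This bypasses the orbit obstruction entirely.
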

The proof of Theorem~\ref{com2} will be given in
Subsection~\ref{subsect:proof} below. The main tool is the
Siegel--Eichler orthogonal transvections introduced in
\cite[Ch. 1--2]{Ei}.

For $L$ of signature $(2,n)$ Borcherds proposed in~\cite{Bo} a very
powerful construction of automorphic forms with respect to subgroups
of $\Orth^+(L)$.  We can use Theorem~\ref{com2} to give an answer to
the question discussed in the remark on page~546 of~\cite{Bo}.

\begin{corollary}\label{ch-det}
For $L$ a lattice as in Theorem~\ref{com2}, the orthogonal group
$\Tilde{\Orth}^+(L)$ has only one non-trivial character,
namely~$\det$, and $\Tilde\SO^+(L)$ has no non-trivial characters.
\end{corollary}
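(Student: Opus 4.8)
The plan is to deduce Corollary~\ref{ch-det} directly from Theorem~\ref{com2} using the identification of the character group with the abelianisation. First I would recall that, for any group $G$, the set of characters $\Hom(G,\CC^\cross)$ factors through $\abel{G}=G/\comm{G}$, since $\CC^\cross$ is abelian; conversely every homomorphism $\abel{G}\to\CC^\cross$ pulls back to a character of $G$. So a description of $\abel{G}$ as an abstract abelian group determines $\Hom(G,\CC^\cross)$ completely.

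Applying this to $G=\Tilde\SO^+(L)$: Theorem~\ref{com2} gives $\abel{\Tilde\SO^+(L)}$ trivial, hence $\Hom(\Tilde\SO^+(L),\CC^\cross)$ is trivial, i.e. $\Tilde\SO^+(L)$ has no non-trivial characters. Applying it to $G=\Tilde\Orth^+(L)$: Theorem~\ref{com2} gives $\abel{\Tilde\Orth^+(L)}\imic\ZZ/2\ZZ$, so $\Hom(\Tilde\Orth^+(L),\CC^\cross)\imic\Hom(\ZZ/2\ZZ,\CC^\cross)\imic\ZZ/2\ZZ$; there is thus exactly one non-trivial character, necessarily of order~$2$. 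It remains to identify this character with $\det$. The point is that $\det\colon\Tilde\Orth^+(L)\to\{\pm1\}$ is a homomorphism that is non-trivial: there certainly exist reflections $\sigma_a$ with $a^2=-2$ lying in $\Tilde\Orth^+(L)$ (the lattice contains two hyperbolic planes, hence plenty of roots, and such reflections lie in $\Tilde\Orth^+(L)$ as noted after~\eqref{defineorth}), and $\det\sigma_a=-1$. Since there is only one non-trivial character and $\det$ is one, $\det$ must be it.

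I would also remark why this answers the question in the remark on page~546 of~\cite{Bo}: Borcherds's construction produces automorphic forms that, a priori, transform under a character of the modular group, and the natural question is which characters actually occur; Corollary~\ref{ch-det} says the only possibilities for $\Tilde\Orth^+(L)$ are the trivial character and $\det$, so in particular any such form has at most a sign character, and on the index-two subgroup $\Tilde\SO^+(L)$ it is genuinely invariant.

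There is essentially no obstacle here: the entire content is Theorem~\ref{com2}, and the corollary is a formal translation via $\abel{G}=\Hom(G,\CC^\cross)$ together with the trivial observation that $\det$ realises the unique non-trivial character. The only point requiring a word is the non-triviality of $\det$ on $\Tilde\Orth^+(L)$, which follows because $\Tilde\Orth^+(L)$ contains a reflection in a $(-2)$-vector (equivalently, because by Theorem~\ref{thKn}-type generation the group $\Tilde\Orth^+(L)$ properly contains $\Tilde\SO^+(L)$, the quotient being detected by $\det$).
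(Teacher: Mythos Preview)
Your proposal is correct and is exactly the argument the paper intends: the corollary is stated without proof, as an immediate consequence of Theorem~\ref{com2} via the identification of the character group with $\abel{G}$ (this identification is even built into the paper's notation). Your extra line checking that $\det$ is actually non-trivial on $\Tilde\Orth^+(L)$ by exhibiting a $(-2)$-reflection is the only point that needs a word, and you handle it correctly.
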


\begin{remark}
In many cases where the quotient $\Tilde{\Orth}^+(L) \backslash \cD_L$
of the homogeneous domain $\cD_L$ associated to $L$ (see Section
{\ref{section:vanishing}} below) represents a moduli functor. In
these cases, Corollary~\ref{ch-det} also means that the torsion group
of the Picard group of the associated moduli stack is $\ZZ/2\ZZ$. See also
\cite[Proposition 2.3]{GH1} for the case of abelian surfaces.
\end{remark}

Returning to the polarised $\Kthree$ lattice $L_{2d}$
we note that $\Orth({L^\vee}_{2d}/L_{2d})\cong (\ZZ/2\ZZ)^{\rho(d)}$
where $\rho(d)$ is the number of divisors of $d$ (see \cite{GH2}).
Then according to Theorem \ref{com2}
$[\Orth(L_{2d}), \Orth(L_{2d})]=\Tilde{\SO}^+(L_{2d})$ and
$$
\abel{\Orth(L_{2d})}\cong (\ZZ/2\ZZ)^{\rho(d)+2}.
$$

\section{Vanishing order of cusp forms}\label{section:vanishing}

The modular form $\eta^2$ is a cusp form for $\SL_2(\ZZ)$, but it has
highly non-trivial character and its order of vanishing at a cusp is
not an integer (it is not a section of a line bundle, only of a
$\QQ$-line bundle). In~\cite{GH1} there are also many examples of
modular forms with more complicated characters for orthogonal groups
of lattices of signature~$(2,3)$. On the other hand,
Corollary~\ref{ch-det} shows that for lattices satisfying the
conditions of Theorem~\ref{com2} there are no modular forms with
complicated characters (indeed no complicated characters). In this
section we consider lattices of signature $(2,n)$ and analyse the
relation between the character of a modular form and its possible
orders of vanishing at cusps. We use the following notation: $\cD_L$
is the symmetric domain associated with the lattice $L$; $\cD^\bullet$
is the affine cone on $\cD$; $F$ is a cusp, corresponding to an
isotropic subspace defined over $\QQ$, and $\cD(F)$ a suitable
neighbourhood of it; $U(F)$ is the centre of the unipotent radical of
the stabiliser of $F$ in $\Aut(\cD)$ and $U(F)_\ZZ$ is the
intersection of $U(F)$ with the modular group; $\cH_n$ is a tube
domain. For more details we refer to~\cite{GHSorth} and for the
general theory of toroidal compactification to~\cite{AMRT}.

\begin{proposition}\label{integer_vanishing}
  Let $L=2U\oplus L_0$ be a lattice of signature $(2,n)$ containing
  two hyperbolic planes and let $\psi$ be a modular form with
  character $\det$ or trivial character for an arithmetic subgroup of
  $\Tilde\Orth^+(L)$. Then the order of vanishing of $\psi$ along any
  boundary component $F$ of $\cD_L$ is an integer.
\end{proposition}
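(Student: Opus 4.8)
The plan is to unwind the definition of the order of vanishing and check that, under the hypothesis on the character, the exponents that occur are forced to be integers. The order of vanishing of $\psi$ along a boundary component $F$ is read off from the expansion of $\psi$ with respect to $U(F)_\ZZ$: for a one-dimensional cusp this is the Fourier--Jacobi expansion $\psi=\sum_{m}\phi_m\,q_F^{\,m}$ in the parameter $q_F=e^{2\pi i z_F}$ attached to a generator of the rank-one lattice $U(F)_\ZZ$, while for a zero-dimensional cusp it is the Fourier expansion along the abelian unipotent radical, and the order of vanishing along a boundary divisor lying over $F$ is then the minimum, over the Fourier support, of the pairing $(\mu,v)$ with a primitive generator $v$ of the corresponding ray of the admissible fan in $U(F)\otimes\RR$. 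The point is that a modular form with a character $\chi$ of order $N$ only transforms by an $N$-th root of unity under $U(F)_\ZZ$, so a priori these exponents lie in a non-trivial coset of $\ZZ$ and the order of vanishing is merely a rational number; the proposition asserts that this coset is trivial when $\chi$ is $\det$ or trivial.

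The key step is to observe that $\chi$ restricts to the trivial character on $U(F)_\ZZ$. Because $U(F)$ is a subgroup of the unipotent radical of the stabiliser of $F$, every element of $U(F)$ is unipotent; concretely these elements are the Eichler transvections attached to $F$ that are discussed in Section~\ref{section:JacobiEichler} --- for a zero-dimensional cusp the transvections $E(e,\cdot)$ with $e$ spanning $F$, and for a one-dimensional cusp the central transvections of the Heisenberg-type unipotent radical. A unipotent element of $\Orth(L)$ has all eigenvalues equal to $1$, hence determinant $1$, so $\det$ and the trivial character both restrict to the identity on $U(F)_\ZZ$. Since moreover these transvections act on the tube-domain coordinates by translations, with trivial automorphy factor, $\psi$ is genuinely invariant under $U(F)_\ZZ$ --- not merely invariant up to a scalar.

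Granting this, the expansion of $\psi$ at $F$ is indexed by the dual lattice $U(F)_\ZZ^\vee$ with no fractional shift, so all exponents lie in $\ZZ$; the Koecher principle, which applies because $L=2U\oplus L_0$ has Witt index at least two, forces them to be non-negative, and hence $\ord_F(\psi)\in\ZZ_{\ge 0}$. I do not expect a genuine obstacle: the one point requiring care is to state uniformly, for zero- and one-dimensional cusps, what ``order of vanishing along $F$'' and $U(F)_\ZZ$ mean (following \cite{GHSorth} and \cite{AMRT}), and to use the splitting $L=2U\oplus L_0$ to know that every boundary component has the standard shape, so that $U(F)$ is indeed generated by the transvections above. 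It is worth adding the remark that when $L$ also satisfies the Kneser conditions this hypothesis on the character costs nothing: by Corollary~\ref{ch-det} the only characters of $\Tilde\Orth^+(L)$ are $\det$ and the trivial one.
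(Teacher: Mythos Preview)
Your proposal is correct and follows essentially the same approach as the paper: both argue that $\chi$ is trivial on $U(F)_\ZZ$ because unipotent elements have determinant~$1$, and that the automorphy factor is also trivial there, so $\psi$ is genuinely $U(F)_\ZZ$-invariant and the Fourier exponents are integers. The only difference is emphasis: where you assert that the transvections act by translations with trivial automorphy factor, the paper verifies this explicitly by writing down the matrices for $U(F)$ in both the one-dimensional and zero-dimensional cases and reading off the relevant coordinate of $g\bigl(p(Z)\bigr)$ in the tube-domain realisation.
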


\begin{proof}
If $\psi$ is of weight $k$ then near the boundary component $F$ we have
$$
\psi(gZ)=j(g,Z)\chi(g)\psi(Z),
$$
where $Z\in \cD_L(F)$ and $g\in U(F)_\ZZ$, for some factor of
automorphy $j$ and $\chi$ the character of the modular form $\psi$. If
the factor $j(g,Z)\chi(g)$ is equal to $1$ for every $g\in U(F)_\ZZ$
then $\psi$ is a section of a line bundle near $F$ and its order of
vanishing along $F$ is therefore an integer.

Under the hypotheses of the proposition, we do indeed have $\chi(g)=1$
because $g$ is unipotent and therefore has trivial determinant. It
therefore remains to check that the factor of automorphy $j(g,Z)$ is
also trivial for $g\in U(F)_\ZZ$.

If $F$ is of dimension $1$ then according to~\cite[Lemma 2.25]{GHSK3}
we have
$$
U(F)=\left\{\begin{pmatrix}I&0&\begin{pmatrix}0&ex\\ -x&0\end{pmatrix}\\
0&I&0\\ 0&0&I\end{pmatrix}\mid x\in\RR\right\}.
$$
But the automorphy factor is given by the last ($(n+2)$-nd) coordinate of
$g\big(p(Z)\big)\in\cD_L$, where
\begin{eqnarray*}
p\colon \cH_{n}&\To& \cD_L\\
Z=(z_n, \dots, z_1)
&\Mapsto& \big( -\frac{1}{2}(Z, Z)_{L_1}:
z_n: \cdots :z_1 : 1\big)
\end{eqnarray*}
is the tube domain realisation of $\cD_L$: see
\cite[Section~3]{GHSorth} or \cite[Section 2]{Gr2} for the notation and
more detail.  From this
description it is immediate that $j(g,Z)=1$ for $g\in U(F)_\ZZ$.

If $F$ is of dimension $0$ then $F$ corresponds to some isotropic
vector $v\in L$, and $U(F)$ is the centre of the unipotent radical
of the stabiliser of $v$. In this case the unipotent radical is
abelian. With respect to a basis of $L\tensor \QQ$ in which $v$ is
the last ($n+2$-nd) element, the penultimate ($n+1$-st) element $w$
is also isotropic and the remaining elements span the orthogonal
complement $L'$ of those two, we have
$$
U(F)=\left\{\begin{pmatrix}I_n&b&0\\
0&1&0\\ {}^tc&x&1\end{pmatrix}\mid L'b+\alpha c=0,\ {}^tb L'
b+2\alpha x=0\right\}.
$$
Here $b$ and $c$ are column vectors, $x\in \RR$ and
$\alpha=(w,v)_L$: compare \cite[(2.7)]{Ko2}. In this case the tube
domain is contained in $\CC^n$ and is identified with a subset of the
locus $z_{n+1}=1\subset \cD^\bullet_L$. The automorphy factor $j(g,Z)$
is therefore equal to the $n+1$-st coordinate of $g(p(Z))$, where
$p(Z)_{n+1}=1$; but this is~$1$ as $p(Z)$ is a column vector.
\end{proof}
From Proposition~\ref{integer_vanishing} and Corollary~\ref{ch-det} we
have immediately the following result.
\begin{corollary}\label{order_1}
If $L$ is a lattice of signature $(2,n)$ satisfying the conditions of
Theorem~\ref{com2}, then any cusp form for $\Tilde\Orth^+(L)$ or
$\Tilde\SO^+(L)$ vanishes to integral order along any toroidal
boundary divisor. In particular the order of vanishing of a cusp form
along a boundary divisor is always at least~$1$.
\end{corollary}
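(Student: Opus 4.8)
The plan is to obtain Corollary~\ref{order_1} as a formal consequence of Proposition~\ref{integer_vanishing} and Corollary~\ref{ch-det}, with essentially no extra work. First I would note that the hypotheses of Theorem~\ref{com2}, together with the signature assumption in Corollary~\ref{order_1}, force $L$ to contain two hyperbolic planes, so that $L=2U\oplus L_0$ with $L_0$ negative definite of rank $n-2$. In particular $L$ is of the shape required in Proposition~\ref{integer_vanishing}, and it also satisfies the hypotheses of Theorem~\ref{com2}, hence of Corollary~\ref{ch-det}.

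Next I would pin down the character of the cusp form. Let $\psi$ be a cusp form for the full group $\Tilde\Orth^+(L)$, with character $\chi\colon\Tilde\Orth^+(L)\to\CC^\cross$. By Corollary~\ref{ch-det} the only characters of $\Tilde\Orth^+(L)$ are the trivial one and $\det$, so $\chi$ is either trivial or $\det$. Applying Proposition~\ref{integer_vanishing} with the arithmetic subgroup taken to be $\Tilde\Orth^+(L)$ itself, we conclude that near any boundary component $F$ of $\cD_L$ the form $\psi$ is a section of an honest line bundle (not merely a $\QQ$-line bundle), and hence its order of vanishing along every boundary divisor of a toroidal compactification lying over $F$ is an integer. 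Since in signature $(2,n)$ every toroidal boundary divisor lies over a $0$- or $1$-dimensional rational boundary component, this accounts for all toroidal boundary divisors. For a cusp form $\psi$ for $\Tilde\SO^+(L)$ the argument is the same: $\Tilde\SO^+(L)$ is an arithmetic subgroup of $\Tilde\Orth^+(L)$, and Corollary~\ref{ch-det} now forces $\chi$ to be trivial, so Proposition~\ref{integer_vanishing} again applies.

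Finally, for the last sentence of the corollary: a cusp form vanishes along every boundary component, so its order of vanishing along any toroidal boundary divisor is strictly positive; being an integer by the previous step, it is therefore at least~$1$.

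I do not expect any genuine obstacle here, since the corollary is immediate once the two quoted results are in hand. The only point that deserves a word of care is the passage from ``order of vanishing along a boundary component $F$'' in Proposition~\ref{integer_vanishing} to ``order of vanishing along a toroidal boundary divisor'' in the statement of Corollary~\ref{order_1}; this is harmless because the real content of Proposition~\ref{integer_vanishing} is that $\psi$ becomes a section of a line bundle in a toroidal neighbourhood of $F$, and hence has a well-defined integer multiplicity along each of the toroidal boundary divisors lying over $F$.
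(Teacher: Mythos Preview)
Your proposal is correct and follows exactly the paper's approach: the paper simply states that the corollary follows immediately from Proposition~\ref{integer_vanishing} and Corollary~\ref{ch-det}, and your argument spells out precisely this deduction. One minor redundancy: the presence of two hyperbolic planes is already part of the hypotheses of Theorem~\ref{com2}, so it need not be ``forced'' by the signature assumption.
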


\section{Eichler transvections and the Jacobi group}\label{section:JacobiEichler}

In this section we analyse the modular groups and construct useful
sets of generators for them.

\subsection{Eichler transvections}\label{subsect:transvections}

Let $V=L\otimes \QQ$ be a quadratic space over $\QQ$ and let $e$ be an
isotropic vector in $V$ (i.e.\ $e^2=0$) and $a\in e^\perp_V$. The map
$$
t'(e,a)\colon v \Mapsto v-(a,v)e \qquad (v\in e_V^\perp)
$$
belongs to the orthogonal group $\Orth(e_V^\perp)$.

\begin{lemma}\label{lemma:unique_extension}
$t'(e,a)$ extends to a unique element $t(e,a)\in \Orth(V)$.
\end{lemma}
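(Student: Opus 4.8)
The plan is to prove existence by exhibiting an explicit formula for $t(e,a)$ and checking that it is an isometry, and to prove uniqueness by exploiting that $e_V^\perp$ is a hyperplane in the non-degenerate space $V$.

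For existence I would put
$$
t(e,a)\colon v\Mapsto v+(v,e)\,a-\bigl((v,a)+\tfrac{1}{2}(a,a)(v,e)\bigr)e\qquad(v\in V),
$$
the Siegel--Eichler transvection in the form of \cite[Ch.~1--2]{Ei}. This is manifestly $\QQ$-linear, and when $v\in e_V^\perp$ one has $(v,e)=0$, so the formula collapses to $v-(v,a)e=t'(e,a)(v)$; hence $t(e,a)$ extends $t'(e,a)$. It then remains to check that $(t(e,a)v,\,t(e,a)w)=(v,w)$ for all $v,w\in V$. This is a direct expansion using only $e^2=0$ and, since $a\in e_V^\perp$, the relation $(a,e)=0$; the terms proportional to $(v,e)(w,e)$ collect to $\bigl(1-2\cdot\tfrac{1}{2}\bigr)(a,a)(v,e)(w,e)=0$, which is exactly why the coefficient $-\tfrac{1}{2}(a,a)$ is the correct one. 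Therefore $t(e,a)\in\Orth(V)$.

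For uniqueness, note that $V$ is non-degenerate and $e\ne 0$ is isotropic, so $e_V^\perp$ is a hyperplane; choose $f\in V$ with $(e,f)=1$, so that $V=e_V^\perp\oplus\QQ f$. If $t_1,t_2\in\Orth(V)$ both restrict to $t'(e,a)$ on $e_V^\perp$, then $g:=t_2^{-1}t_1\in\Orth(V)$ fixes $e_V^\perp$ pointwise, and it suffices to show $g(f)=f$. Write $g(f)=u+\lambda f$ with $u\in e_V^\perp$. Since $g(e)=e$, comparing $(g(f),e)$ with $(f,e)=1$ gives $\lambda=1$; comparing $(g(f),x)$ with $(f,x)$ for $x\in e_V^\perp$ gives $(u,x)=0$ for all such $x$, so $u$ lies in the radical of the restricted form, which is $e_V^\perp\cap(e_V^\perp)^\perp=\QQ e$ by non-degeneracy of $V$; write $u=\mu e$. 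Finally $(g(f),g(f))=(f,f)$ becomes $2\mu+(f,f)=(f,f)$, so $\mu=0$, whence $g(f)=f$ and $g=\id$.

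The expansions are routine; the only subtle point is the uniqueness step, because $e_V^\perp$ is a \emph{degenerate} hyperplane --- it contains $e$, which is orthogonal to every vector of $e_V^\perp$ --- so the familiar fact that an isometry fixing a hyperplane pointwise is the identity or the reflection in it does not apply. The argument above shows that in this degenerate case no non-trivial such isometry exists, and that is what forces the extension to be unique.
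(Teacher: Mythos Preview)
Your proof is correct and follows essentially the same approach as the paper: both give the explicit Eichler transvection formula for existence and argue uniqueness by completing $e$ to a hyperbolic pair $(e,f)$ and showing that any isometry fixing $e_V^\perp$ pointwise must send $f$ to $f$. You simply supply the details the paper leaves implicit, including the verification that the formula is an isometry and the full computation determining $g(f)$.
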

\begin{proof}
We first complete $e$ to a rational hyperbolic plane $\QQ e\oplus \QQ
f \subseteq V$.  If there exist $\gamma_1$, $\gamma_2\in \Orth(V)$
such that $\gamma_1(e)=\gamma_2(e)=e$ and $\gamma_1|_{e_V^\perp}=
\gamma_2|_{e_V^\perp}$, then they take the same value on $f$.  The
unique orthogonal extension of $t'(e,a)$ on $V$ is given by the map
\begin{equation}\label{t2}
t(e,a)\colon v\Mapsto v-(a,v)e+(e,v)a-\frac{1}{2}(a,a)(e,v)e.
\end{equation}
This element is called an \emph{Eichler transvection} (see \cite[\S 3]{Ei}).
\end{proof}
We note that $t(e,a)$ acts as the identity on $e_V^\perp\cap a_V^\perp
\subset V$. In particular $t(e,a)(e)=e$.
Using Lemma \ref{lemma:unique_extension} it is  easy to see that
\begin{equation}\label{t3}
t(e,a)t(e,b)=t(e,a+b)\quad\text{and}\quad
t(e,a)^{-1}=t(e,-a),
\end{equation}
\begin{equation}\label{t4}
\gamma t(e,a)\gamma^{-1}=t(\gamma(e),\gamma(b))
\qquad \forall\, \gamma\in \Orth(V),
\end{equation}
\begin{equation}\label{t5}
t(xe,a)=t(e,xa),\quad t(e,xe)=\id \qquad \forall\, x\in \QQ^*,
\end{equation}
\begin{equation}\label{t6}
t(e,a)=\sigma_a\sigma_{a+\frac{1}2(a,a)e}\qquad \text{if }(a,a)\ne 0.
\end{equation}
Using equation~\eqref{t6} one can prove (see \cite[(3.12)]{Ei}) that
for any non-isotropic $a$ orthogonal to the rational hyperbolic plane
$\QQ e\oplus\QQ f$
\begin{equation}\label{t7}
t(f,a)\,t(e,\, \tfrac{2}{(a,a)}a)\,t(f,a)
=\sigma_a\sigma_{e+(2/(a,a))f}.
\end{equation}
From the definition~\eqref{t2} we see that any transvection $t(e,a)$
is unipotent.  From equation~\eqref{t6} we have that $t(e,a)\in
\SO^+(L\otimes \QQ)$.  According to equation~\eqref{t2}
\begin{equation}\label{t8}
t(e,a)\in \Tilde\SO^+(L)\quad \text{for any }\,e\in L,\ a\in L
\ \text{with } (e,e)=(e,a)=0.
\end{equation}
Moreover for any primitive isotropic $e$ in $L$
$$
t(e,*)\colon e^\perp_L\To \Tilde\SO^+(L)
$$
is a homomorphism of groups with kernel $\ZZ e$.

One can also give a description of the transvections in the terms of
the Clifford algebra of $L$.  For any isotropic $e\in L\otimes \QQ$
and any $a$ such that $(a,e)=0$ we have that $1-ea\in
\hbox{Spin}(L\otimes \QQ)$ and $\pi(1-ea)=t(e,a)$, where
$\pi(\gamma)(v)=\gamma v\gamma^{-1}$ for any $\gamma$ in the Clifford
group (see, e.g., \cite{HOM}).

\subsection{The Jacobi group}\label{subsect:jacobi_group}

Suppose $L=U\oplus U_1\oplus L_0$, where $U=\ZZ e\oplus\ZZ f$ and
$U_1=\ZZ e_1\oplus\ZZ f_1$ are two integral hyperbolic planes. Let $F$
be the totally isotropic plane spanned by $f$ and $f_1$ and let $P_F$
be the parabolic subgroup of $\SO^+(L)$ that preserves~$F$. This
corresponds to a $1$-dimensional cusp of the modular variety
$\SO^+(L)\backslash\cD_L$.  We choose a basis of $L$ of the form
$(e,e_1,\dots , f_1,f)$.  The subgroup $\Gamma^J(L_0)$ of $P_F$ of
elements acting trivially on the sublattice $L_0$ is called the
\emph{Jacobi group}.

The Jacobi group is isomorphic to the semidirect product of
$\SL_2(\ZZ)$ with the Heisenberg group $H(L_0)$, the central extension
$\ZZ\splitextn (L_0\times L_0)$. More precisely (see~\cite{Gr2} for more
information) we define elements $[A]\in \Gamma^J(L_0)$ for
$A\in\SL_2(\ZZ)$ and $[u,v;z]\in \Gamma^J(L_0)$ for $u$, $v\in L_0$,
$z\in \ZZ$ by
$$
[A]:=\begin{pmatrix}
A^*&0&0\\
0&\Eins_{n_0}&0\\
0&0&A
\end{pmatrix},
$$
$$
[u,v;z]:=
\begin{pmatrix}
1&0&-{}^tvS_0&-(u,v)-z& -(v,v)/2\\
0&1&-{}^tuS_0&-(u,u)/2&z\\
0&0&\Eins_{n_0}&u&v\\
0&0&0&1&0\\
0&0&0&0&1\end{pmatrix},
$$
where $S_0$ is the matrix of the quadratic form $L_0$ of rank
$n_0$, we consider $u$ and $v$ as column vectors, and
$A^*=\left(\smallmatrix
0&1\\1&0\endsmallmatrix\right)A^{-1}\left(\smallmatrix
0&1\\1&0\endsmallmatrix\right)$. Thus any element of $\Gamma^J(L_0)$
may be written in the form $[A]\cdot[u,v;z]$ for suitable $A$, $u$,
$v$ and $z$.

The Jacobi group is generated by the transvections
\begin{equation}\label{tSL}
t(e,f_1)=\left[\left(\smallmatrix 1&1\\0&1\endsmallmatrix\right)\right],\quad
\ t(f,e_1)=\left[\left(\smallmatrix 1&0\\-1&1\endsmallmatrix\right)\right],
\end{equation}
\begin{equation}\label{tHL}
t(e,v)=[0,v;0],\quad t(e_1,u)=[u,0;0], \quad t(e,e_1)=[0,0;1].
\end{equation}
Note that $t(e,e_1)$ generates the centre of the Heisenberg group.  It
is easy to see, using only the elementary divisor theorem, that
$$
\SL_2(\ZZ)\times \SL_2(\ZZ)/\{\pm(\Eins_2,\Eins_2)\}\imic\SO^+(U\oplus
U_1).
$$
If we identify $xe+x_1e_1+y_1f_1+yf\in U\oplus U_1$ with
$X=\left(\smallmatrix x_1&x\\y&-y_1\endsmallmatrix\right)$, the
isomorphism is given by
\begin{equation}\label{isom(2,2)}
(B,A) \Mapsto
\left(X\mapsto BXA^{-1}\right).
\end{equation}
The map $X\to BXA^{-1}$ certainly preserves the quadratic form $-\det
X$.  Its kernel is the centre $\pm(\Eins_2,\Eins_2)$ of
$\SL_2(\ZZ)\times \SL_2(\ZZ)$.  The first copy of $\SL_2(\ZZ)$
(parametrised by $B$ in~\eqref{isom(2,2)}) is generated by the
transvections $t(e,e_1)$ and $t(f,f_1)$, the second by $t(e,f_1)$ and
$t(f,e_1)$.  From the representation above and from the elementary
divisor theorem for $2\times 2$ matrices there follows the next lemma,
which is well-known.
\begin{lemma}\label{SO(2,2)}
$\SO^+(U\oplus U_1)$ is generated by the four  transvections
$t(e,e_1)$, $t(e,f_1)$, $t(f,e_1)$ and $t(f,f_1)$.
For any $v\in U\oplus U_1$ there exists $g\in \SO^+(U\oplus U_1)$
such that $g(v)\in U_1$.
\end{lemma}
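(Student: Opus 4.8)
The plan is to deduce both parts directly from the isomorphism~\eqref{isom(2,2)}. Under it $\SO^+(U\oplus U_1)$ is identified with $\bigl(\SL_2(\ZZ)\times\SL_2(\ZZ)\bigr)/\{\pm(\Eins_2,\Eins_2)\}$, a vector $v=xe+x_1e_1+y_1f_1+yf$ corresponds to the integral matrix $X=\left(\smallmatrix x_1&x\\y&-y_1\endsmallmatrix\right)$, and the class of $(B,A)$ acts by $X\mapsto BXA^{-1}$. In particular $v$ lies in $U_1=\ZZ e_1\oplus\ZZ f_1$ precisely when $X$ is diagonal.

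For the first assertion I would use the elementary fact that $\SL_2(\ZZ)$ is generated by the two elementary matrices $\left(\smallmatrix 1&1\\0&1\endsmallmatrix\right)$ and $\left(\smallmatrix 1&0\\-1&1\endsmallmatrix\right)$ (equivalently $\left(\smallmatrix 1&1\\0&1\endsmallmatrix\right)$ and $\left(\smallmatrix 1&0\\1&1\endsmallmatrix\right)$), which follows from the Euclidean algorithm. By~\eqref{tSL}, \eqref{tHL} and the discussion preceding the lemma, the transvections $t(e,e_1)$ and $t(f,f_1)$ map under~\eqref{isom(2,2)} to such a generating pair of the first copy $\SL_2(\ZZ)\times\{\Eins_2\}$, while $t(e,f_1)$ and $t(f,e_1)$ map to a generating pair of the second copy $\{\Eins_2\}\times\SL_2(\ZZ)$; this last matching can also be read off directly from~\eqref{tSL}. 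Since the two copies generate $\SL_2(\ZZ)\times\SL_2(\ZZ)$ and~\eqref{isom(2,2)} is surjective, the four transvections generate all of $\SO^+(U\oplus U_1)$.

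For the second assertion, given $v$ with matrix $X$, I would put $X$ into diagonal form using the elementary divisor theorem for integral $2\times2$ matrices: there exist $B,A\in\GL_2(\ZZ)$ with $BXA^{-1}$ diagonal, and after absorbing a sign into each of $B$ and $A$ — which preserves diagonality of $BXA^{-1}$ — one may take $B,A\in\SL_2(\ZZ)$. The image $g\in\SO^+(U\oplus U_1)$ of the class of $(B,A)$ under~\eqref{isom(2,2)} then sends $v$ to the vector with matrix $BXA^{-1}$, which is diagonal, so $g(v)\in U_1$. The argument is essentially bookkeeping: the only mild points to watch are the correct matching of the prescribed transvections with elementary generators of the two $\SL_2(\ZZ)$ factors and the passage from $\GL_2(\ZZ)$ to $\SL_2(\ZZ)$ in the diagonalisation step, and neither is a genuine obstacle.
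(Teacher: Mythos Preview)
Your proposal is correct and follows essentially the same approach as the paper: the paper merely asserts that the lemma follows from the representation~\eqref{isom(2,2)} together with the elementary divisor theorem for $2\times 2$ matrices, and you have spelled out precisely these details. The only minor point is that ``absorbing a sign'' should be read as multiplying $B$ (resp.\ $A$) on the appropriate side by a diagonal sign matrix such as $\diag(-1,1)$ when $\det B=-1$ (resp.\ $\det A=-1$), which indeed preserves diagonality of $BXA^{-1}$; this is clearly what you intend.
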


\subsection{The group $E(L)$ of unimodular transvections}\label{subsect:unimodular}

The \emph{divisor} $\div(l)$ of $l\in L$ is the positive generator of
the ideal $(l,L)\subset \ZZ$, so $l^*=l/\div(l)$ is a primitive
element of the dual lattice $L^\vee$. Therefore $l^*\pmod L$ is an
element of order $\div(l)$ of the discriminant group $D(L)$ and $\div(l)$ is
a divisor of $\ord(D(L))=|\det (L)|$.  One can complete an isotropic element
$e\in L$ to an integral isotropic plane $U=\ZZ e\oplus\ZZ f\subset L$
if and only if $\div(e)=1$.  We call such an isotropic vector
\emph{unimodular}.  For a unimodular isotropic vector $e$ we have
$L=U\oplus L_1$.

We define $E(L)$ to be the group generated by all transvections by
unimodular isotropic vectors:
$$
E(L):=\group{\{t(e,a)\mid e,a\in L,\ (e,e)=(e,a)=0,\ \div(e)=1\}}.
$$
We have seen that $E(L)$ is a subgroup of $\Tilde{\SO}^+(L)$.  Now let
us fix a unimodular isotropic vector $e\in L$ and the decomposition
$L=U\oplus L_1$ where $U=\ZZ e\oplus\ZZ f$.  Then we set
$$
E_U(L_1):=\group{\{t(e,a),\ t(f,a)\mid a\in L_1 \}}.
$$

\begin{proposition}\label{ET}
Let $L=U\oplus U_1\oplus L_0$, where $U=\ZZ e\oplus\ZZ f$, $U_1$ is
the second copy of the integral hyperbolic plane in $L$ and
$L_1=U_1\oplus L_0$.
\begin{itemize}
\item[{\rm (i)}] If $u, v\in L$ are primitive, $(u,u)=(v,v)$ and
  $u^*\equiv v^*\mod L$, then there exists $\tau\in E_{U}(L_1)$ such that
  $\tau(u)=v$.
\item[{\rm (ii)}] $E(L)=E_{U}(L_1)$.
\item[{\rm (iii)}] $\Orth(L)=\group{E_{U}(L_1),\ \Orth(L_1)}$.
\item[{\rm (iv)}] For any $(-2)$-vector $r\in L$ there exists $\rho\in
  E_{U}(L_1)$ such that $\sigma_r= \rho\cdot\sigma_{e-f}$.
\end{itemize}
\end{proposition}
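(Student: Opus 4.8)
The plan is to prove (i) first --- this is the Eichler criterion restricted to the subgroup $E_U(L_1)$ --- and then to obtain (ii), (iii) and (iv) as essentially formal consequences of (i), using the transvection identities \eqref{t3}--\eqref{t7} and Lemma~\ref{SO(2,2)}.

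For (i) I would argue by an explicit reduction of $u$ to a normal form. Writing $u=\alpha e+\beta f+c_1+c_0$ with $\alpha,\beta\in\ZZ$, $c_1\in U_1$, $c_0\in L_0$, one reads off from \eqref{t2} that $t(e,a_0)$ and $t(f,a_0)$ with $a_0\in L_0$ replace $c_0$ by $c_0+\beta a_0$, resp.\ $c_0+\alpha a_0$ (and alter $\alpha$, resp.\ $\beta$), leaving $c_1$ fixed, whereas the transvections with second entry in $U_1$ fix $c_0$ and, by Lemma~\ref{SO(2,2)}, generate a copy of $\SO^+(U\oplus U_1)$ acting on the $(U\oplus U_1)$--component of $u$. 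Using the first kind of move one transfers the content of $u$ into the $U\oplus U_1$ summand, so that the $(U\oplus U_1)$--component becomes primitive; then Lemma~\ref{SO(2,2)} puts that component into a shape determined by its norm alone; and a final application of $t(e,a_0),t(f,a_0)$ reduces $c_0$ to a fixed lift of the class of $u^\ast$ in $D(L)=D(L_0)$, scaled by $\div(u)$. The vector obtained depends only on $(u,u)$ and $u^\ast\bmod L$, which gives the claim. I expect this to contain the bulk of the work, and the main obstacle --- the one point at which the \emph{second} hyperbolic plane $U_1$ is genuinely needed --- to be the step that pushes all the content into $U\oplus U_1$, where the extra isotropic directions supplied by $U_1$ are what make the procedure terminate.

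Granting (i), part (ii) should be immediate: a unimodular isotropic $e'\in L$ has the same norm $0$ and the same (zero) image in $D(L)$ as $e$, so there is $\tau\in E_U(L_1)$ with $\tau(e)=e'$; for $a'\in L$ with $(e',a')=0$ one has $(e,\tau^{-1}a')=(e',a')=0$, whence $\tau^{-1}a'\in e^{\perp}_L=\ZZ e\oplus L_1$, and then \eqref{t3}, \eqref{t4} and \eqref{t5} give
$$
t(e',a')=\tau\,t\bigl(e,\tau^{-1}a'\bigr)\,\tau^{-1}\in E_U(L_1),
$$
so $E(L)\subseteq E_U(L_1)$; the opposite inclusion is the definition. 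For (iii) I would take $g\in\Orth(L)$ and note that $g(e)$ is again unimodular isotropic, since $g$ preserves norms and divisors; by (i) there is $\tau\in E_U(L_1)$ with $\tau(e)=g(e)$, so $\tau^{-1}g$ fixes $e$. Writing $(\tau^{-1}g)(f)=\mu e+f+c$ with $c\in L_1$ (the $f$--coefficient being $1$ as $(e,\,\cdot\,)$ is preserved, and $\mu=-\tfrac12(c,c)$ by isotropy), \eqref{t2} gives $t(e,c)(e)=e$ and $t(e,c)(f)=\mu e+f+c$, so $t(e,c)^{-1}\tau^{-1}g$ fixes $e$ and $f$, hence preserves $U^{\perp}_L=L_1$ and lies in $\Orth(L_1)$. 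Thus $g\in\group{E_U(L_1),\Orth(L_1)}$, and the reverse inclusion is clear.

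For (iv) I would first record that $\sigma_{e-f}$ interchanges $e$ and $f$ and fixes $L_1$ pointwise, so by \eqref{t4} it conjugates $t(e,a)\leftrightarrow t(f,a)$ for $a\in L_1$ and hence normalises $E_U(L_1)$. Next, by (i) the $E_U(L_1)$--orbit of the root $r$ contains a root $r_0$ lying in $L_1$ (hence orthogonal to $U$): one may take $r_0=e_1-f_1$ when $\div(r)=1$, and when $\div(r)=2$ such an $r_0$ can be written down explicitly inside $U_1\oplus L_0$ --- producing it is the one further place where $U_1\subseteq L_1$ is used. Fix $\tau\in E_U(L_1)$ with $\tau(r_0)=r$, so $\sigma_r=\tau\sigma_{r_0}\tau^{-1}$. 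By \eqref{t7} applied to $a=r_0$ (which has $(r_0,r_0)=-2$ and is orthogonal to $\QQ e\oplus\QQ f$),
$$
\sigma_{r_0}\sigma_{e-f}=t(f,r_0)\,t(e,-r_0)\,t(f,r_0)\in E_U(L_1),
$$
and therefore, inserting $\sigma_{e-f}\sigma_{e-f}^{-1}$ and using $\sigma_{e-f}^{-1}=\sigma_{e-f}$,
$$
\sigma_r\sigma_{e-f}=\tau\,\bigl(\sigma_{r_0}\sigma_{e-f}\bigr)\,\bigl(\sigma_{e-f}\,\tau^{-1}\,\sigma_{e-f}\bigr)\in E_U(L_1),
$$
the last two factors lying in $E_U(L_1)$ by \eqref{t7} and by the normalising property, respectively. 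Setting $\rho:=\sigma_r\sigma_{e-f}$ and using $\sigma_{e-f}^2=\id$ gives $\sigma_r=\rho\,\sigma_{e-f}$, as required.
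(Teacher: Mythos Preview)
Your arguments for (ii), (iii) and (iv) are essentially the paper's, with only cosmetic differences (you send $e\mapsto e'$ where the paper sends $e'\mapsto e$, etc.); your handling of the stray $e$--component in (ii) via \eqref{t5} is in fact slightly more explicit than the paper's. In (iv) the case distinction on $\div(r)$ and the search for a specific $r_0$ are unnecessary: all that is used is that the $E_U(L_1)$--orbit of $r$ meets $L_1$, which is just the first move in the proof of (i) (Lemma~\ref{SO(2,2)} acting on the $U\oplus U_1$ component while fixing $L_0$). The paper simply takes any such $a\in L_1$ and proceeds.

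For (i) your normal-form strategy is different from the paper's and, as written, has an ordering problem. After your step~2, Lemma~\ref{SO(2,2)} puts the $(U\oplus U_1)$--component into $U_1$, so $\alpha=\beta=0$; but then in your step~3 the moves $t(e,a_0)$ and $t(f,a_0)$ send $c_0\mapsto c_0+\beta a_0$ and $c_0\mapsto c_0+\alpha a_0$, i.e.\ they no longer touch $c_0$ at all. So the ``final reduction of $c_0$'' cannot happen in that order; you would have to interleave the steps or choose a different normal form in step~2 that retains a nonzero $U$--coefficient. This is fixable, but the paper avoids the whole issue with a much shorter argument: use Lemma~\ref{SO(2,2)} at the outset to move \emph{both} $u$ and $v$ into $L_1$, observe $\div(u)=\div(v)=d$ from the hypothesis $u^\ast\equiv v^\ast$, and then pass directly from $u$ to $v$ by three transvections
\[
u\ \stackrel{t(e,u')}{\Mapsto}\ u-de\ \stackrel{t(f,(u-v)/d)}{\Mapsto}\ v-de\ \stackrel{t(e,-v')}{\Mapsto}\ v,
\]
where $u',v'\in L_1$ satisfy $(u,u')=(v,v')=d$. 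No normal form is needed, and the hypothesis $u^\ast\equiv v^\ast\bmod L$ enters only to guarantee that $(u-v)/d\in L_1$.
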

\begin{proof} (i) First we note that $\div(u)=\ord_{D(L)}(u^*)$.
Therefore $\div(u)=\div(v)=d$.  According to Lemma~\ref{SO(2,2)} there
exists $\tau_1 \in E_{U}(U_1)$ such that $\tau_1(u)\in L_1$.  Thus we may
assume that $u$ and $v$ are in $L_1$.  Then we can realise the
translation by $w=(u-v)/d$ in the sublattice $L_1$ orthogonal to $U$
as a composition of Eichler transvections:
$$
u\ \stackrel{t(e,u')}\Mapsto\
(u-de)\  \stackrel{t(f,w)}\Mapsto\
(v-de)\  \stackrel{t(e,-v')}\Mapsto\  v,
$$
where $u',v'\in L_1$ are such that $(u,u')=(v,v')=d$.
\smallskip

(ii) Let $t(u,a)$ be an arbitrary unimodular transvection in $E(L)$
with  $(u,u)=0$ and $\div(u)=1$.
According to (i) there exists $\tau \in E_{U}(L_1)$  such that
$\tau (u)=e$. By equation~\eqref{t4} we obtain that
$\tau\, t(u,a)\tau^{-1}=t(\tau(u), \tau(a))=t(e,\tau(a))$ is in $E_{U}(L_1)$.
\smallskip

(iii) Let $g\in \Orth(L)$. According to (i) and (ii) there exists
$\tau\in E_{U}(L_1)$ such that $\tau (g(e))=e$.  We have $\bigl(\tau
g(e), \tau g(f)\bigr)=(e,\tau g(f))=(e,f)=1$. Therefore
$$
(\tau g)(f)=f+b-\frac{1}2(b,b)e=t(e,b)(f),
$$
where $b\in L_1$.  Now we see that $h=t(e,-b)\tau g$ acts trivially on
$U$.  Therefore $h\in \Orth(L_1)$.
\smallskip

(iv) There exists $\tau\in E_{U}(L_1)$ such that $\tau(r)=a\in L_1$.
According to equation~\eqref{t7}
$$
\tau \sigma_r \tau^{-1}=\sigma_a=
t(f,a)\,t(e,-a)\,t(f,a)\,\sigma_{e-f}
$$
($\sigma_a$ and $\sigma_{e-f}$ commute).
To finish we use that $\sigma_{e-f}\tau\sigma_{e-f}\in E_{U}(L_1)$
for any $\tau\in E_{U}(L_1)$.
\end{proof}

Notice that (iii) is true for all the groups we have considered: for
instance, $\Tilde\Orth^+(L)=\group{E_{U}(L_1),\ \Tilde\Orth^+(L_1)}$
and similarly for $\SO$, $\Tilde\SO$, etc.. This is because in the
proof of (iii) the product $t(e,-b)\tau\in\Tilde\SO^+(L)$, which is a
subgroup of all of these groups.

All the results of Proposition~\ref{ET} are essentially to be found
in~\cite{Ei}. (i),~which is sometimes called the \emph{Eichler
  criterion}, is proved in \cite[Satz 10.4]{Ei} for lattices over
local rings. See also the second proof given in ``Anmerkungen zum
zweiten Kapitel" \cite[p.~231]{Ei}. There is a global variant in
\cite[p.85]{Br}. (iii) was proved in \cite[5.2]{Wa} for unimodular
lattices (see also \cite{P-SS}, \cite{Eb}, \cite{Gr2}).  One can prove
(ii), under an additional condition on $\rank_\pi(L)$ for all primes
ideals $\pi$, over any commutative ring, but the proof is much longer:
see \cite[Theorem 3.3(a)]{Va1}).

Proposition~\ref{ET} gives us the following result about generators of
the orthogonal group which was briefly indicated in~\cite[p.1194]{Gr2}.

\begin{proposition}\label{generators}
Let $L=U\oplus U_1\oplus L_0$ be an even lattice with two hyperbolic
planes, such that $\rank_3(L)\ge 5$ and $\rank_2(L)\ge 6$. Then
\begin{equation}\label{SOandE}
\Tilde\SO^{+}(L)=\Orth'(L)=E(L)=E_{U}(L_1)
\end{equation}
and
\begin{equation}\label{otildeplus}
\Tilde\Orth^+(L)=\group{\Gamma^J(L_0),\ \sigma_1},
\end{equation}
where $L_1=U_1\oplus L_0$, $\Gamma^J(L_0)$ is the Jacobi group,
$U_1=\ZZ e_1\oplus \ZZ f_1$ and $\sigma_1=\sigma_{e_1-f_1}$.
\end{proposition}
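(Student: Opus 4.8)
The plan is to establish the two displayed identities separately, deducing \eqref{SOandE} from Kneser's theorem and \eqref{otildeplus} from the structure of the Eichler transvections together with the single reflection $\sigma_1$. For \eqref{SOandE} I would first check that $L$ satisfies the Kneser conditions: it contains $U\oplus U_1$, so its Witt index over $\RR$ is at least $2$; it represents $-2$ via $e-f$; and $\rank_3(L)\ge5$, $\rank_2(L)\ge6$ are among the hypotheses. Then Corollary~\ref{comparespinors} gives $\Orth'(L)=\Tilde\SO^+(L)$, while Proposition~\ref{ET}(ii) gives $E(L)=E_U(L_1)$, which lies in $\Tilde\SO^+(L)$ by \eqref{t8}; so it is enough to prove $\Orth'(L)\subseteq E_U(L_1)$. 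By Theorem~\ref{thKn}, $\Orth'(L)$ is generated by products $\sigma_a\sigma_b$ with $a^2=b^2=-2$, and Proposition~\ref{ET}(iv) lets me write $\sigma_a=\rho_a\sigma_{e-f}$ and $\sigma_b=\rho_b\sigma_{e-f}$ with $\rho_a,\rho_b\in E_U(L_1)$, so that $\sigma_a\sigma_b=\rho_a(\sigma_{e-f}\rho_b\sigma_{e-f})$. Since conjugation by $\sigma_{e-f}$ interchanges the two families $t(e,\cdot)$ and $t(f,\cdot)$ of generators of $E_U(L_1)$ (by \eqref{t4}), it normalises $E_U(L_1)$, so $\sigma_a\sigma_b\in E_U(L_1)$; this closes the chain of inclusions and proves \eqref{SOandE}.

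For \eqref{otildeplus}, the determinant gives a surjection $\Tilde\Orth^+(L)\to\{\pm1\}$ with kernel $\Tilde\SO^+(L)$, nontrivial on $\sigma_1=\sigma_{e_1-f_1}\in\Tilde\Orth^+(L)$, so $\Tilde\Orth^+(L)=\langle\Tilde\SO^+(L),\sigma_1\rangle=\langle E_U(L_1),\sigma_1\rangle$ by \eqref{SOandE}. Each transvection in \eqref{tSL}--\eqref{tHL} generating $\Gamma^J(L_0)$ has the shape $t(e',a)$ with $e'$ isotropic and $(e',a)=0$, hence lies in $\Tilde\SO^+(L)=E_U(L_1)$ by \eqref{t8}; therefore $\langle\Gamma^J(L_0),\sigma_1\rangle\subseteq\Tilde\Orth^+(L)$, and the remaining task is the reverse inclusion $E_U(L_1)\subseteq\langle\Gamma^J(L_0),\sigma_1\rangle$. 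Now $E_U(L_1)$ is generated by the $t(e,a)$ and $t(f,a)$ with $a\in L_1=U_1\oplus L_0$. Writing $a=xe_1+yf_1+w$ with $x,y\in\ZZ$ and $w\in L_0$ and using \eqref{t3}, I see that $t(e,a)$ is a product of $t(e,e_1),t(e,f_1),t(e,w)$, all among the generators of $\Gamma^J(L_0)$, while $t(f,a)$ is a product of $t(f,e_1)\in\Gamma^J(L_0)$, of $t(f,f_1)=\sigma_1\,t(f,e_1)\,\sigma_1\in\langle\Gamma^J(L_0),\sigma_1\rangle$ (using \eqref{t4} with $\sigma_1(f)=f$, $\sigma_1(e_1)=f_1$), and of $t(f,w)$ with $w\in L_0$.

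The step that is not immediate, and which I expect to be the main obstacle, is to show $t(f,w)\in\langle\Gamma^J(L_0),\sigma_1\rangle$ for $w\in L_0$: the transvections visibly contained in $\Gamma^J(L_0)$ only involve the $e$- and $e_1$-directions apart from $t(f,e_1)$, so an $f$-transvection in an $L_0$-direction has to be produced indirectly. The device I would use is to build a copy of $\SO^+(U\oplus U_1)$ first: by Lemma~\ref{SO(2,2)}, the copy of $\SO^+(U\oplus U_1)$ in $\Orth(L)$ (acting trivially on $L_0$) is generated by $t(e,e_1),t(e,f_1),t(f,e_1),t(f,f_1)$, all of which lie in $\langle\Gamma^J(L_0),\sigma_1\rangle$ by the previous paragraph; hence $\SO^+(U\oplus U_1)\subseteq\langle\Gamma^J(L_0),\sigma_1\rangle$. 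Using the isomorphism \eqref{isom(2,2)} (equivalently, the Eichler criterion for the unimodular lattice $U\oplus U_1$) one finds $g\in\SO^+(U\oplus U_1)$ with $g(e)=f$; since $g$ fixes $L_0$ pointwise, \eqref{t4} gives $g\,t(e,w)\,g^{-1}=t(f,w)$ for every $w\in L_0$, so each such $t(f,w)$ lies in $\langle\Gamma^J(L_0),\sigma_1\rangle$. This finishes the proof of $E_U(L_1)\subseteq\langle\Gamma^J(L_0),\sigma_1\rangle$ and hence of \eqref{otildeplus}; note that $\sigma_1$ is genuinely needed, as $t(f,f_1)\notin\Gamma^J(L_0)$.
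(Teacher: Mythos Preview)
Your proof is correct and follows essentially the same route as the paper's. For \eqref{SOandE} the paper invokes Proposition~\ref{ET}(iv) and Theorem~\ref{thKn} exactly as you do, using that $\sigma_{e-f}$ normalises $E_U(L_1)$. For \eqref{otildeplus} the only cosmetic difference is in producing $t(f,w)$ for $w\in L_0$: the paper writes down the explicit conjugator $S\sigma_1 S\sigma_1$ (where $S=\left[\left(\begin{smallmatrix}0&-1\\1&0\end{smallmatrix}\right)\right]$), which sends $e$ to $-f$ and fixes $L_0$, whereas you first observe that all of $\SO^+(U\oplus U_1)$ lies in $\langle\Gamma^J(L_0),\sigma_1\rangle$ and then pick any $g$ there with $g(e)=f$; these amount to the same thing.
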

\begin{proof}
According to Proposition~\ref{ET}(iv) the product $\sigma_a\sigma_b$
of any two reflections with $(a,a)=(b,b)=-2$ belongs to $E(L)$.
Therefore from Theorem~\ref{thKn} and Proposition~\ref{ET}(ii) it
follows that
\begin{equation}\label{SO}
\Tilde\SO^{+}(L)
=\Orth'(L)=E(L)=E_U(L_1)=\group{\{t(c,a)\mid a\in L_1,\ c=e\text{ or }f\}}.
\end{equation}
The Jacobi group $\Gamma^J(L_0)$ contains the transvections $t(e,v)$
($v\in L_1$) and $t(f,e_1)$ (see \eqref{tSL}--\eqref{tHL}). To have
the whole group $\Tilde\SO^{+}(L)=E(L)$ we have to add $t(f,u+xf_1)$
with $u\in L_0$ and $x\in \ZZ$.  The $\SL_2(\ZZ)$-subgroup of the
Jacobi group is generated by $t(e,f_1)$ and $t(f,e_1)$. Consider the element
$S=\left[\left(\smallmatrix 0&-1\\1&0\endsmallmatrix\right)\right]
\in \Gamma^J(L_0)$. We have
$$
S(e)=-e_1,\quad  S(f)=-f_1,\quad S^2=-\id.
$$
Using equation~\eqref{t4} we deduce
\begin{eqnarray*}
\sigma_1t(f,e_1)\sigma_1&=&t(f,f_1),\\
(S\sigma_1S\sigma_1)t(e,u)(S\sigma_1S\sigma_1)^{-1}&=&t(f,u)
\quad\text{for all }u\in L_0.
\end{eqnarray*}
Therefore $\group{\Gamma^J(L_0),\ \sigma_1}$ contains all the
generators of $E_U(L_1)$.  The proposition follows from equation~\eqref{SO}.
\end{proof}

\section{Strong Approximation}\label{section:strongapprox}

In this section we prove Theorem~\ref{com2}, and make some remarks
about similar results over number fields.

It is enough to prove Theorem~\ref{com2} for $\Tilde\SO^{+}(L)$ (or,
equivalently by equation~\eqref{SO}, for $E(L)$), because
$\Tilde\Orth^{+}(L)=\group{\Tilde\SO^{+}(L),\ \sigma_{e-f}}$. Vaserstein
\cite[Theorem~3(c)]{Va1} did this under the extra assumption that that
$\rank_p(L)\ge 5$ for any odd prime~$p$.

Our method is different and Theorem~\ref{com2} does not have
the infinite set of conditions $\rank_p(L)\ge 5$ for $p>3$.
We use the strong approximation theorem
($L$ is indefinite) and the positive solution of the principal
congruence problem for the spinorial kernel
$\Orth'(L)=\Tilde\SO^{+}(L)$ for a lattice $L$ with real Witt index
$\ge 2$ (see \cite[11.4]{Kn2}).

\subsection{Proof of Theorem~\ref{com2}}\label{subsect:proof}

First we note that $\comm{E(L)}$ is an infinite normal subgroup of
$\Orth'(L)$ which is not a subgroup of its centre.  Therefore
$\comm{E(L)}$ contains a congruence subgroup of $\Orth'(L)$ of some
level~$m$. We may assume that $6$ divides~$m$. According to
Proposition~\ref{ET}(ii), the group $E(L)$ is generated by all
$t(e,u)$ and $t(f,v)$ where $u$, $v\in L_1$.  We prove that these
generators are the products of commutators in $E(L_p)$, where
$L_p=L\otimes \ZZ_p$, for any prime divisor $p$ of $m$.  For this
purpose we introduce the Eichler orthogonal transformation $P(s)\in
\SO(L\otimes \QQ_p)$ for $s\in \QQ_p^{\times}$:
$$
P(s)\colon e\Mapsto s^{-1}e,\ f\Mapsto sf,\ u\Mapsto u\ \ \forall\ u\in L_1.
$$
We have $P(s)^{-1}=P(s^{-1})$. We can describe $P(s)$ in terms of
reflections because $\sigma_{e-sf}=P(s^{-1})\psi$, where $\psi\in
\Orth(L)$ is the permutation of $e$ and $f$. Thus
$P(s)=\sigma_{e-f}\sigma_{e-sf}$.  The following formula (see
\cite[(3.16)]{Ei}) can be obtained as a corollary of \eqref{t8}:
\begin{equation}
t(f,sw)t(e,w)=t(e,(1-s\tfrac{w^2}{2})^{-1}w)\,t(f,s(1-s\tfrac{w^2}{2})w)
P((1-s\tfrac{w^2}{2})^{2})
\end{equation}
for any $w\in L_1\otimes \QQ_p$ and $s\in \QQ_p$ such that
$1-s\tfrac{w^2}{2}\ne 0$.  In particular for any $v_6\in L_1\otimes
\ZZ_p$ such that $(v_6,v_6)=6$ and $s=1$ we obtain that $P(4)$ is a
commutator in $E(L_p)$ if $p\ne 2$:
\begin{equation}
P(4)=t(f,2v_6)t(e,2^{-1}v_6)t(f,v_6)t(e, v_6).
\end{equation}
It follows that $t(e,u)$ and $t(f,v)$ are commutators in $E(L_p)$
if $p\ne 2$ or  $3$:
\begin{equation}
t(e,u)=P(4)^{-1}t(e, 3^{-1}u)P(4)t(e,-3^{-1}u).
\end{equation}

Now we consider $p=2$. Let $L=U\oplus U_1\oplus L_0$, with $U=\ZZ
e\oplus\ZZ f$ and $U_1=\ZZ e_1\oplus\ZZ f_1$. For any $u$ orthogonal
to $e$ and $f_1$ we have
$$
t(e_1,u)(e)=e,\qquad t(e_1,u)(f_1)=f_1+u-\frac{1}2(u,u)e_1.
$$
Therefore for any $s\in \QQ_2^{\times}$ we have
$$
[t(e,-sf_1),\,t(e_1,u)]=t(e,su-s\sfrac{u^2}{2}e_1).
$$
Using the same formula for $v$ and $-(u+v)$ we obtain the following
representation
\begin{multline*}
t(e, s(u,v)e_1)=\\
[t(e,-sf_1),\,t(e_1,u)]\cdot[t(e,-sf_1),\,t(e_1,v)]\cdot[t(e,-sf_1),\,t(e_1,-u-v)].
\end{multline*}
Since $\rank_2(L)\ge 6$, we can find $u,v\in L_0\otimes \ZZ_2$ such
that $(u,v)\in \ZZ_2^\times$.  Therefore taking $s=(u,v)^{-1}$ we
obtain $t(e, e_1)$ as the product of three commutators in $E(L\otimes
\ZZ_2)$.  The same argument works for $t(e, f_1)$.  Then we can
replace $e_1$ by any unimodular isotropic vector of the form
$e'_1=e_1+w-\tfrac{w^2}2f_1$ where $w\in L_0$. We note that
$(e'_1,f_1)=1$.  We can repeat the arguments above for this new
hyperbolic plane $U'_1=\langle e'_1,f_1\rangle$ and we obtain that
$t(e, e_1+w-\tfrac{w^2}2f_1)$ belongs to the commutator subgroup of
$E(L\otimes \ZZ_2)$.  Using $t(e,e_1)$, $t(e,f_1)$ and $t(e,
e_1+w-\tfrac{w^2}2f_1)$, we see that $t(e,l)$ for any $l\in L_1$ is a
commutator in $E(L\otimes \ZZ_2)$.

For $p=3$ we can use the same calculation with a vector $u\in L_0$
such that $(u,u)\in \ZZ_3^{\times}$ ($\rank_3 (L)\ge 5$).

We have proved that the generators $t(e,u)$ and $t(f,v)$ ($u,v\in L_1$)
are elements of the commutator subgroup of $E(L_p)$ for any prime
divisor $p$ of the level $m$. So we can write
$$
t(e,u)=[t_1^{(p)}, t_2^{(p)}]\cdot \ldots \cdot [t_{2n-1}^{(p)}, t_{2n}^{(p)}],
$$ 
where the index $n$ does not depend on $p$ (some of the factors may
be trivial).  We denote this product of commutators by $\mcomm{t_i}$.

Using the strong approximation theorem for the spinorial kernel
$\Orth'(L)$ (see \cite[104:4]{OM}) we find $h_i\in \Orth'(L\otimes
\QQ)$ such that
$$
\lVert t_i^{(p)}-h_i\rVert_p<\varepsilon \quad \forall
\ p|m\quad\text{and}\quad
\lVert h_i\rVert_p=1\quad \forall\ p\nmid m.
$$
If $\varepsilon$ is sufficiently small then $h_i\in \Orth'(L)$ and
$\lVert t(e,u)\mcomm{h_i}^{-1}-1\rVert_p$ will be small for any prime
divisor of $m$.  Then $t(e,u)\mcomm{h_i}^{-1}\equiv 1 \mod m$. It
follows that $t(e,u)$ belongs to the commutator subgroup of
$\Orth'(L)=E(L)$.

This completes the proof of Theorem~\ref{com2}.

\subsection{Orthogonal groups over number fields.}\label{subsect:numberfields}

A version of Theorem~\ref{thKn} holds over an algebraic number
field. To formulate this, collecting the remarks in \cite[\S 5]{Kn1},
we must give a suitable extended version of the Kneser conditions. We
say that a lattice $L$ over the ring of integers $\cO_K$ of a number
field $K$ satisfies the Kneser conditions if $L$ is even and
represents $-2$; there exists a real place $\nu$ of $K$ such that the
Witt index of $L\otimes K_\nu$ is at least $2$; and the the
$\pi$-rank $\rank_\pi(L)$ is at least $5$ (respectively at least $6$)
if $\pi$ is a place such that the residue field $k_\pi$ is $\FF_3$
(respectively $k_\pi=\FF_2$).

\begin{theorem}\label{thKn2} (\cite{Kn1})
Suppose $L$ is an integral lattice over $\cal O_K$ satisfying the
Kneser conditions. Then $\Orth'(L)=\SO(L)\cap \ker \sn_K$ is generated
by the products of reflections $\sigma_a\sigma_b$ where $a,b\in L$ and
$a^2=b^2=-2$.
\end{theorem}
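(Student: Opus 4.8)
The plan is to deduce Theorem~\ref{thKn2} from the rational version, Theorem~\ref{thKn}, by a local-to-global argument together with the structural results already in hand (Proposition~\ref{ET}). First I would observe that the statement is purely about generation, so it suffices to show that every element of $\Orth'(L)$ lies in the subgroup $R$ generated by the $\sigma_a\sigma_b$ with $a^2=b^2=-2$. Since $K$ has a real place $\nu$ at which the Witt index is $\ge 2$, the lattice $L$ is indefinite there, and we may apply strong approximation for the spinor kernel (as in \cite[104:4]{OM}), exactly as in the proof of Theorem~\ref{com2}: the point is that $\Orth'(L)$ is generated by elements that are congruent modulo a suitable ideal to products of Eichler transvections, and the transvections $t(e,a)$ are themselves products of two reflections by \eqref{t6}, hence lie in $R$.

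More concretely, I would first reduce to transvections. By Proposition~\ref{ET}(ii)--(iii), applied over $\cO_K$ (the Kneser conditions guarantee two hyperbolic planes after splitting off roots, or at least enough isotropic vectors to run the Eichler criterion), the group $E(L)$ of unimodular transvections is generated by the $t(e,a)$, $t(f,a)$ with $a\in L_1$; and Proposition~\ref{ET}(iv) shows every product of two $(-2)$-reflections lies in $E(L)$. Conversely, by the local structure, for every finite place $\pi$ the group $\Tilde\Orth(L\otimes\cO_{K,\pi})$ is generated by $(-2)$-reflections (this is the analogue of \cite[Satz~2]{Kn1} used in Corollary~\ref{comparespinors}), so $\Orth'(L)$ coincides with $E(L)=\Tilde\SO^+(L)$ just as in Corollary~\ref{comparespinors} and Proposition~\ref{generators}. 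Then each generator $t(e,a)$ is $\sigma_a\sigma_{a+\frac12(a,a)e}$ when $(a,a)\ne 0$ by \eqref{t6}; the isotropic case is handled by writing $t(e,a)$ as a product of transvections by non-isotropic vectors using \eqref{t3}, \eqref{t7}, reducing everything to reflections by vectors of arbitrary nonzero square.

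The remaining issue is that the reflections appearing are $\sigma_v$ with $v^2$ an arbitrary unit times a square, not necessarily $-2$. Here I would invoke the local generation statements of \cite[\S 5]{Kn1}: the $\pi$-rank hypotheses ($\rank_\pi\ge 5$ for $k_\pi=\FF_3$, $\ge 6$ for $k_\pi=\FF_2$, and automatically $\ge 5$ elsewhere since $K\ne\FF_2,\FF_3$ forces enough room) are exactly what is needed so that, locally, any product $\sigma_u\sigma_v$ of two reflections can be rewritten as a product of $\sigma_a\sigma_b$ with $a^2=b^2=-2$ — this is the content of Kneser's Satz~4 in its local form, and it is where the rank conditions are genuinely used. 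Combining this with strong approximation for $\Orth'(L)$ over $K$ (legitimate because of the real place $\nu$ with Witt index $\ge 2$) lets us approximate an arbitrary $g\in\Orth'(L)$ by an element of $R$ modulo any given ideal, and since $R$ contains a congruence subgroup — being an infinite normal noncentral subgroup, by the congruence subgroup property for the spinor kernel — we conclude $g\in R$.

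The main obstacle will be bookkeeping the local-global passage carefully: one must check that the congruence subgroup property and strong approximation are available in the $\cO_K$-setting with the stated hypotheses (they are, by \cite{Kn2} and \cite{OM}, since the real place provides the needed noncompactness), and that Kneser's local rewriting of $\sigma_u\sigma_v$ into $(-2)$-reflections applies at every finite place under the $\pi$-rank conditions. Everything else is a faithful transcription of the arguments for Theorem~\ref{com2} and Corollary~\ref{comparespinors} with $\QQ$ replaced by $K$ and $\ZZ$ by $\cO_K$; I would simply remark that these go through \emph{mutatis mutandis} rather than repeat them.
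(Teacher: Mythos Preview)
The paper does not give its own proof of Theorem~\ref{thKn2}: the statement is attributed directly to Kneser \cite{Kn1}, collected from the remarks in \cite[\S 5]{Kn1}, and no argument is supplied. So there is no ``paper's own proof'' to compare your proposal against.

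That said, your proposal has a genuine gap. You repeatedly invoke Proposition~\ref{ET} and the machinery of $E(L)=E_U(L_1)$, but all of that is stated and proved only for lattices of the shape $L=U\oplus U_1\oplus L_0$, i.e.\ lattices that \emph{contain two integral hyperbolic planes}. The Kneser conditions for Theorem~\ref{thKn2} require only that the Witt index of $L\otimes K_\nu$ is $\ge 2$ at \emph{one} real place~$\nu$; this does not force two (or even one) copies of $U$ to split off over~$\cO_K$. Your parenthetical hedge ``the Kneser conditions guarantee two hyperbolic planes after splitting off roots, or at least enough isotropic vectors to run the Eichler criterion'' is exactly where the argument breaks: it is simply not true in general, and without it you cannot reduce to transvections via Proposition~\ref{ET}. (Note also that the paper explicitly adds ``contains two hyperbolic planes'' and ``$\cO_K$ is a principal ideal ring'' as \emph{extra} hypotheses in the second part of Theorem~\ref{tha1} precisely because they are needed to make Proposition~\ref{ET} work over~$\cO_K$; they are not consequences of the Kneser conditions.)

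A second, smaller issue: framing the argument as ``deduce the $\cO_K$-case from the rational case Theorem~\ref{thKn}'' is logically the wrong direction, since $K=\QQ$ is a special case of the number-field statement. What your outline actually uses is Kneser's \emph{local} generation results plus strong approximation and the congruence subgroup property --- which is essentially Kneser's own method, not a reduction to the $\QQ$-case. If you want to reconstruct the proof, you should work directly from \cite[Satz~2, Satz~4 and \S5]{Kn1} over the local completions, without assuming any global splitting of hyperbolic planes.
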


In this context we have the following result, analogous to
Theorem~\ref{com1} and Theorem~\ref{com2}.

\begin{theorem}\label{tha1}
Let $L$ be a lattice over the ring of integers $\cO_K$ of an algebraic number
field $K$ which satisfies the Kneser conditions. Then $\abel{\Orth'(L)}$
is an abelian $2$-group. Its order divides $2^{N-1}$, where $N$ is the
number of different $\Orth'(L)$-orbits of $(-2)$-vectors in $L$.

If $L$ contains two hyperbolic planes and $\cO_K$ is a principal ideal
ring then $\abel{\Orth'(L)}$ is trivial.
\end{theorem}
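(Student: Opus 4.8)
The plan is to mimic the structure of the proof of Theorem~\ref{com2} given in Subsection~\ref{subsect:proof}, but replace the strong approximation argument — which is what forces the "$\cO_K$ principal" hypothesis in the second assertion — with the congruence-subgroup input and a purely local commutator computation. First I would establish the $2$-group statement exactly as in Theorem~\ref{com1}: since $L$ satisfies the Kneser conditions, Theorem~\ref{thKn2} tells us $\Orth'(L)$ is generated by products $\sigma_a\sigma_b$ of reflections in $(-2)$-vectors; if $a\equiv b$ under $\Orth'(L)$ then $\sigma_a\sigma_b=\sigma_a g\sigma_a g^{-1}$ is a commutator, and using $\sigma_u\sigma_v=\sigma_{\sigma_u(v)}\sigma_u$ one rewrites any class modulo $\comm{\Orth'(L)}$ as a product $\sigma_{a_1}\cdots\sigma_{a_N}$ with the $a_i$ in distinct orbits; squaring such a class lands in the commutator. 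This gives $\abel{\Orth'(L)}$ an abelian $2$-group of order dividing $2^{N-1}$ (the $-1$ because $\Orth'(L)\subseteq\SO(L)$ and $\sigma_a\notin\Orth'(L)$, so we are always looking at even-length products).

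For the second assertion I would assume $\cO_K$ is a principal ideal ring and that $L$ contains two hyperbolic planes, and use Proposition~\ref{ET}: over $\cO_K$ one still has $E(L)=E_U(L_1)$, and under the Kneser conditions $\Orth'(L)=E(L)$ is generated by the transvections $t(e,u)$, $t(f,v)$ with $u,v\in L_1$. As in the proof of Theorem~\ref{com2}, $\comm{E(L)}$ is an infinite non-central normal subgroup of $\Orth'(L)$, so by the solution of the congruence subgroup problem for the spinorial kernel (\cite[11.4]{Kn2}, valid for lattices of real Witt index $\ge2$ over number rings) it contains a congruence subgroup of some level $\mathfrak m$, which we may take divisible by $6$. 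It then suffices to show each generator $t(e,u)$ and $t(f,v)$ is a product of commutators in $E(L\otimes\cO_{K,\pi})$ for every prime $\pi\mid\mathfrak m$, and then to patch these local expressions into a global one.

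The local step goes through verbatim: the Eichler identities \eqref{t8}, the formula for $t(f,sw)t(e,w)$, and the resulting expression $P(4)=t(f,2v_6)t(e,\tfrac12 v_6)t(f,v_6)t(e,v_6)$ show that for residue characteristic $\ne2,3$ the transvections are commutators; for $\pi$ with $k_\pi=\FF_2$ one uses $[t(e,-sf_1),t(e_1,u)]=t(e,su-s\tfrac{u^2}2 e_1)$ together with $\rank_2(L)\ge6$ (automatic here — two hyperbolic planes) to write $t(e,e_1),t(e,f_1)$, and varying the hyperbolic plane $U_1$ by $e_1\mapsto e_1+w-\tfrac{w^2}2 f_1$, all of $t(e,l)$; similarly for $k_\pi=\FF_3$ using $\rank_3(L)\ge5$. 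The patching step is where the principal ideal hypothesis on $\cO_K$ enters and replaces strong approximation: because $\cO_K$ is a PID the lattice $L$ still decomposes as $U\oplus U_1\oplus L_0$ with free complements, so the \emph{global} transvection identities between the $t(e,u)$, $t(f,v)$ already hold over $\cO_K$ itself, not merely $\pi$-adically — in other words the local computations above are literally identities in $E(L)=\Orth'(L)$ once one has a global hyperbolic splitting. The main obstacle will be making this last point precise: one must check that the auxiliary vectors $v_6$ with $(v_6,v_6)=6$, and the vectors $u,v\in L_0$ with $(u,v)$ a unit, can be chosen in $L_0$ itself (using the rank hypotheses and $\cO_K$ a PID, hence that $L_0$ represents the relevant values appropriately locally at the finitely many bad primes and that a single global vector works), and that $P(4)$ — defined via $\sigma_{e-f}\sigma_{e-sf}$ — lies in $\Orth'(L)$ over $\cO_K$, i.e. has trivial spinor norm over $K$. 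Granting this, every generator of $\Orth'(L)=E(L)$ is a product of commutators in $E(L)$, so $\comm{\Orth'(L)}=\Orth'(L)$ and $\abel{\Orth'(L)}$ is trivial.
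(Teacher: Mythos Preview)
Your treatment of the first assertion is fine and matches the paper. The gap is in the second assertion, specifically in your attempt to \emph{replace} strong approximation by a global computation.

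The local commutator identities you cite from Subsection~\ref{subsect:proof} are not identities in $E(L)$ over $\cO_K$: they genuinely live only in $E(L\otimes\cO_{K,\pi})$. Look at the formulas. The expression $P(4)=t(f,2v_6)\,t(e,\tfrac12 v_6)\,t(f,v_6)\,t(e,v_6)$ contains $t(e,\tfrac12 v_6)$, which is not a transvection of $L$ unless $2\mid v_6$; the identity $t(e,u)=P(4)^{-1}t(e,3^{-1}u)P(4)\,t(e,-3^{-1}u)$ requires $3^{-1}u\in L_1$; and at the residue-$2$ primes you set $s=(u,v)^{-1}$, which is a $\pi$-adic unit but not in general an element of $\cO_K$. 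Having a global hyperbolic splitting does nothing to remove these denominators. So the claim that ``the local computations above are literally identities in $E(L)=\Orth'(L)$'' is false, and your proposed patching step does not go through. The obstacle you flag (finding $v_6$ and $u,v$ globally) is real but secondary; even granting those, the formulas still do not define elements of $E(L)$.

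You have also misplaced the role of the PID hypothesis. In the paper it is \emph{not} used to bypass strong approximation; it is used earlier, to make Lemma~\ref{SO(2,2)} and hence Proposition~\ref{ET} go through over $\cO_K$ (via the elementary divisor theorem for $2\times 2$ matrices, which needs a PID). Once $\Orth'(L)=E(L)=E_U(L_1)$ is established over $\cO_K$, the paper then runs the proof of Theorem~\ref{com2} exactly as before: local commutator expressions at each $\pi\mid\mathfrak m$, followed by strong approximation (still available over number fields) and the congruence subgroup solution from~\cite{Kn2} to produce global $h_i\in\Orth'(L)$ approximating the local data. That is the argument you should be writing down.
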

\begin{proof}
The first part of the theorem is similar to Theorem~\ref{com1}.  We
show briefly how to generalise the proof of Theorem~\ref{com2} to the
case of algebraic number fields.  According to \cite{Va2} (see
also \cite{HOM}) the group $\SL_2(\cO_K)$ is generated by the
unipotent matrices $\left(\smallmatrix 1&a\\0&1\endsmallmatrix\right)$
and $\left(\smallmatrix 1&0\\b&1\endsmallmatrix\right)$ where $a,b\in
\cO_K$.

If $\cO_K$ is a principal ideal domain then Lemma~\ref{SO(2,2)} is
still true if we replace $\SO^+(U\oplus U_1)$ by $\Orth'(U\oplus
U_1)$.  The proof is the same: one uses the action~\eqref{isom(2,2)}
and the elementary divisor theorem, which is true for principal ideal
domains in its classical matrix form (there exist $g$, $h\in
\SL_2(\cO_K)$ such that $gMh$ is diagonal). Moreover using \eqref{tSL}
and Vaserstein's result from~\cite{Va2} we obtain $\Orth'(U\oplus
U_1)=E(U\oplus U_1)$.  Using this version of Lemma~\ref{SO(2,2)}, we
see that Proposition~\ref{ET} is still true over a principal ideal
domain.  (There are no changes in the proof.)  Now we can repeat the
proof of Theorem~\ref{com2} using the strong approximation theorem and
the positive solution of the congruence subgroup problem
(see~\cite{Kn2}).
\end{proof}

\section{Fundamental groups}\label{section:fundamental}

In this section we use our results above to compute the fundamental
groups of some locally symmetric varieties and their compactifications.

Let $\cD$ be a bounded symmetric domain and let $\Gamma$ be an
arithmetic group acting on $\cD$. Put $X=\Gamma\backslash \cD$.

\begin{lemma}\label{surjection}
There is a surjective homomorphism $\Gamma\onto \pi_1(X)$, which is an
isomorphism if $\Gamma$ acts freely on $\cD$.
\end{lemma}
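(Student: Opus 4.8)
The plan is to identify $\pi_1(X)$ with a quotient of $\Gamma$ using the standard theory of properly discontinuous group actions on simply connected spaces. First I would recall the two basic facts about the set-up: a bounded symmetric domain $\cD$ is contractible — via the Harish-Chandra realisation it is biholomorphic to a bounded convex domain in $\CC^N$, hence homeomorphic to a Euclidean space — so in particular $\cD$ is simply connected, locally path-connected and locally compact; and an arithmetic group $\Gamma$ acts properly discontinuously on $\cD$, so that $X=\Gamma\backslash\cD$, with the quotient topology, is a locally compact, locally path-connected Hausdorff space.

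Next I would invoke Armstrong's theorem on the fundamental group of the orbit space of a discontinuous group (M.~A.~Armstrong, \emph{Proc.\ Cambridge Philos.\ Soc.}~\textbf{64} (1968)): if $\Gamma$ acts properly discontinuously on a simply connected, locally compact, locally path-connected Hausdorff space $\cD$, then $\pi_1(\Gamma\backslash\cD)\cong\Gamma/H$, where $H$ is the normal subgroup of $\Gamma$ generated by the elements that have a fixed point in $\cD$. The composite $\Gamma\onto\Gamma/H\cong\pi_1(X)$ is then the desired surjective homomorphism. (If one prefers a self-contained argument, the surjection can be produced directly: choose a base point $x_0\in\cD$, lift a loop in $X$ based at its image to a path in $\cD$ from $x_0$ to $\gamma x_0$, and check this assignment is a well-defined surjective homomorphism on $\pi_1$; surjectivity uses that $\cD\to X$ has the path-lifting property away from the branch locus together with local path-connectedness.)

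Finally, if $\Gamma$ acts freely on $\cD$ then no non-trivial element of $\Gamma$ has a fixed point, so $H=\{1\}$ and the surjection above is an isomorphism. Equivalently and more elementarily, a free properly discontinuous action makes $\cD\to X$ a covering map; since $\cD$ is simply connected it is the universal cover of $X$, and $\Gamma$ is then canonically the group of deck transformations, which is $\pi_1(X)$.

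The only point needing genuine care is the verification of the hypotheses of Armstrong's theorem — chiefly that the $\Gamma$-action is properly discontinuous in the precise sense required (each point of $\cD$ has a neighbourhood $U$ with $\gamma U\cap U\neq\emptyset$ for only finitely many $\gamma\in\Gamma$), which is exactly the discreteness property of an arithmetic group acting on its associated symmetric domain, and the simple connectivity of $\cD$, which is immediate from contractibility. All the remaining topological hypotheses hold because $\cD$ is a manifold, so the rest of the argument is formal.
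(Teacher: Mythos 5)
Your argument is correct, but your primary route differs from the paper's. The paper constructs the surjection $\phi\colon\Gamma\onto\pi_1(X)$ directly: pick $p_0\in\cD$, join $p_0$ to $\gamma(p_0)$ by a path (unique up to homotopy since $\cD$ is simply connected), push it down to a loop in $X$, and check that this is a well-defined surjective homomorphism --- exactly the construction you relegate to a parenthesis --- and it handles the free case by observing that $\cD\to X$ is then the universal cover, as you also do. Your main line instead cites Armstrong's theorem, which is a strictly stronger statement: it identifies $\pi_1(X)$ with $\Gamma/H$, where $H$ is the normal subgroup generated by the elements with fixed points. This buys you more than the lemma asks for: it subsumes not only the surjectivity and the free case but also the paper's subsequent Lemma~\ref{fixed} (elements with fixed points lie in $\ker\phi$) and hence Corollary~\ref{zerocondition}, and in fact gives the converse inclusion $\ker\phi\subseteq H$, which the paper never needs. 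The trade-off is that you must verify Armstrong's hypotheses (proper discontinuity of the arithmetic action, simple connectivity and the local topological conditions on $\cD$), all of which you correctly note are standard for a bounded symmetric domain; the paper's construction is more elementary and self-contained, and its functoriality with respect to the base point is what makes the later arguments about compactifications run smoothly. Either way the lemma is established.
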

\begin{proof}
The map $\phi\colon \Gamma\to \pi_1(\cD/\Gamma)$ is defined as
follows. Choose a base point $p_0\in\cD$, and suppose
$\gamma\in\Gamma$. Since $\cD$ is connected and simply-connected, we
may join $p_0$ and $\gamma(p_0)$ by a path $\sigma_\gamma$ and any two
such paths are homotopic. The quotient map $\pi\colon \cD\to
\Gamma\backslash \cD$ makes this into a loop $\pi\circ\sigma_\gamma$
based at $x_0=\pi(p_0)$, and we define $\phi(\gamma)$ to be the
homotopy class $[\pi\circ\sigma_\gamma]\in \pi_1(X,x_0)$.

However, $\pi_1(X, x_0)$ is isomorphic to $\pi_1(X,x)$ for any base
point $x\in X$. It is easy to check that the map $\phi$ is
well-defined and has the required properties.
\end{proof}

\begin{lemma}\label{fixed}
If $\gamma$ has fixed points in $\cD$ then $\gamma\in\ker\phi$.
\end{lemma}

\begin{proof}
In the proof of Lemma~\ref{surjection} we may choose $p_0$ and
$\sigma_\gamma$ freely, so we choose $p_0$ to be a fixed point of
$\gamma$ and $\sigma_\gamma$ to be the constant path at $p_0$. Then
$[\pi\circ\sigma_\gamma]=1$.
\end{proof}

Now we pass to compactifications of $X$. Let $\Bar X$ denote a normal
compactification of $X$ and let $\Tilde X$ denote a projective smooth
model of $\Bar X$.

\begin{proposition}\label{compactifications}
There are surjections $\Gamma\onto \pi_1(\Tilde X)$ and
$\Gamma\onto\pi_1(\Bar X)$, both factoring through $\phi\colon
\Gamma\to \pi_1(X)$.
\end{proposition}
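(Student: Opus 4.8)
The plan is to deduce both surjections from a single elementary topological fact: if $V$ is an irreducible complex variety and $V_0\subseteq V$ is a dense Zariski-open subset, then the inclusion induces a surjection $\pi_1(V_0)\onto\pi_1(V)$ (the complement $V\setminus V_0$ is a proper closed analytic subset, hence of real codimension $\ge 2$, so every loop and every homotopy in $V$ can be perturbed off it). Combined with Lemma~\ref{surjection}, which supplies the surjection $\phi\colon\Gamma\onto\pi_1(X)$, this will produce the two asserted surjections and exhibit the required factorisations through $\phi$.

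The case of $\Bar X$ is immediate. Since $X$ is a dense open subset of the normal compactification $\Bar X$, the fact above gives $\pi_1(X)\onto\pi_1(\Bar X)$; composing with $\phi$ yields a surjection $\Gamma\onto\pi_1(\Bar X)$ which, by construction, factors through $\phi$.

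For $\Tilde X$ one further step is needed, because $\Tilde X$ is a smooth model of $\Bar X$ rather than a compactification of $X$ itself. I would fix a proper birational morphism $r\colon\Tilde X\to\Bar X$ (available by Hironaka, after replacing $\Tilde X$ by a common resolution if necessary) and set $\Tilde X_0:=r^{-1}(X)$. Then $\Tilde X_0$ is a dense open subset of $\Tilde X$, so $\pi_1(\Tilde X_0)\onto\pi_1(\Tilde X)$, while the restriction $r|_{\Tilde X_0}\colon\Tilde X_0\to X$ is a resolution of singularities of $X$. Now $\Gamma$ acts on the bounded symmetric domain $\cD$ properly discontinuously with finite stabilisers, so $X=\Gamma\backslash\cD$ has at worst quotient singularities, and a resolution of a variety with quotient singularities induces an \emph{isomorphism} on fundamental groups. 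Hence $r|_{\Tilde X_0}$ gives $\pi_1(\Tilde X_0)\isoto\pi_1(X)$, and composing its inverse with $\pi_1(\Tilde X_0)\onto\pi_1(\Tilde X)$ produces a surjection $\pi_1(X)\onto\pi_1(\Tilde X)$; composing once more with $\phi$ finishes the proof.

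I expect the main obstacle to be exactly that last input — that resolving quotient singularities does not change the fundamental group (one reduces to the local model $\CC^n/G$, whose resolutions are simply connected, and patches along the exceptional locus); everything else (surjectivity of $\pi_1$ for dense open inclusions, the existence of $r$, and the compositions and factorisations through $\phi$) is purely formal. An alternative that sidesteps this is to arrange from the outset that $\Bar X$ is a toroidal compactification with at worst quotient singularities, so that $r$ already induces $\pi_1(\Tilde X)\isoto\pi_1(\Bar X)$, and one simply composes $\pi_1(X)\onto\pi_1(\Bar X)$ with $\phi$.
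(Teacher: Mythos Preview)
Your argument is correct and matches the paper's in all essentials: for $\Bar X$ the paper invokes the same open-subset surjection (citing Fulton's remark for normal varieties), and for $\Tilde X$ the paper takes precisely your ``alternative'' route through a toroidal compactification $X'$ with only quotient singularities, citing Koll\'ar for the fact that resolving such singularities does not change $\pi_1$, after first noting (via \cite{HK}, \cite{Sa}) that $\pi_1(\Tilde X)$ is independent of the choice of smooth model. Your main route---pulling back to $r^{-1}(X)$ and resolving $X$ itself---is a harmless variant resting on the same key input.
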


\begin{proof}
Note first of all that $\pi_1(\Tilde X)$ does not depend on the choice
of the model $\Tilde X$ (see for example~\cite{HK} or~\cite[Lemma
  1.3]{Sa}). So we may take a toroidal compactification $X'$ of $X$
with only finite quotient singularities and $\Tilde X$ a resolution of
$X'$. Since $X\subset X'$ there is a surjection $\pi_1(X)\to
\pi_1(X')$. By~\cite[\S 7]{Kol}, resolving finite quotient
singularities does not change the fundamental group, so we have (by,
for example,~\cite[Lemma 1.2]{Sa}) a surjection $\Gamma\onto
\pi_1(\Tilde X)$ factoring through $\pi$.

For the case of $\Bar X$, in particular for the Satake
compactification, one may, as in~\cite[p.~42]{Sa}, apply the
remark~\cite[p.~56]{Fu} that the inclusion of an open subvariety in a
normal variety induces a surjection on fundamental groups.
\end{proof}

\begin{corollary}\label{zerocondition}
If $\Gamma$ is generated by elements $\gamma\in\Gamma$ with fixed
points in $\cD$, then $\pi_1(X)=\pi_1(\Tilde X)=1$.
\end{corollary}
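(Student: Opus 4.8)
The plan is purely formal: Corollary~\ref{zerocondition} follows by combining Lemma~\ref{surjection}, Lemma~\ref{fixed} and Proposition~\ref{compactifications}, with no further geometric input. First I would invoke Lemma~\ref{surjection} to produce the surjective homomorphism $\phi\colon\Gamma\onto\pi_1(X)$. Next, by Lemma~\ref{fixed}, every generator of $\Gamma$ of the prescribed type (those with a fixed point in $\cD$) lies in $\ker\phi$. Since $\ker\phi$ is a subgroup of $\Gamma$ containing a generating set, we get $\ker\phi=\Gamma$, and therefore $\pi_1(X)=\phi(\Gamma)=\{1\}$.

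For the smooth model I would then apply Proposition~\ref{compactifications}: the surjection $\Gamma\onto\pi_1(\Tilde X)$ factors as $\Gamma\xrightarrow{\ \phi\ }\pi_1(X)\to\pi_1(\Tilde X)$. Once $\pi_1(X)$ is known to be trivial, this composite is the trivial homomorphism; being also surjective, it forces $\pi_1(\Tilde X)=1$. Exactly the same reasoning, with $\pi_1(\Bar X)$ in place of $\pi_1(\Tilde X)$ in Proposition~\ref{compactifications}, gives $\pi_1(\Bar X)=1$ as well, although only the statement about $X$ and $\Tilde X$ is asserted in the corollary.

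There is essentially no obstacle here; the real work is all contained in the preceding lemmas and proposition. The only step deserving an explicit word of care is the observation that a homomorphism which "factors through $\phi$" in the sense of Proposition~\ref{compactifications} becomes the trivial map as soon as $\pi_1(X)$ is shown to vanish — which is immediate from the meaning of factoring through. I would also remark, for later use, that the hypothesis is met by the groups $\Tilde\Orth^+(L)$ and $\Tilde\SO^+(L)$ of Theorem~\ref{com2}, since by Proposition~\ref{generators} these are generated by reflections and Eichler transvections, each of which fixes a point of $\cD$ (a hyperplane section, respectively the subdomain cut out by the relevant isotropic vector), so that Corollary~\ref{zerocondition} will apply directly to the associated modular varieties.
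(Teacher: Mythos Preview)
Your proof of the corollary itself is correct and is exactly the paper's argument, just written out in more detail: the paper simply says it follows immediately from Lemma~\ref{fixed} and Proposition~\ref{compactifications}.

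However, the final ``remark for later use'' contains a genuine error. Eichler transvections $t(e,a)$ are unipotent (see equation~\eqref{t2} and the comments after it), and a nontrivial unipotent isometry has no fixed point in the bounded symmetric domain $\cD$; the isotropic vector $e$ determines a \emph{boundary} component (a cusp), not an interior subdomain. So you cannot feed the generating set of Proposition~\ref{generators} directly into Corollary~\ref{zerocondition}. The paper handles this in two different ways: for Theorem~\ref{K3simplyconnected} it goes back to Kneser's Theorem~\ref{thKn} and Corollary~\ref{comparespinors}, which give generators consisting of reflections $\sigma_a$ (for $\Tilde\Orth^+$) or products $\sigma_a\sigma_b$ of two reflections (for $\Tilde\SO^+$), and these \emph{do} have fixed points in $\cD$ since $a^\perp$ (resp.\ $a^\perp\cap b^\perp$) still has signature $(2,\,\cdot\,)$. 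For the transvection generators, the paper instead appeals (in the paragraph before Theorem~\ref{Atsimplyconnected}) to a separate result of Sankaran~\cite{Sa} showing that elements of the unipotent radical of a parabolic lie in $\ker\phi$ for reasons coming from the \emph{compactification}, not from interior fixed points.
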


\begin{proof}
This follows immediately from Lemma~\ref{fixed} and
Proposition~\ref{compactifications}.
\end{proof}
For an integral lattice  $L$ of real signature $(2,n)$ one can determine
the hermitian homogeneous domain of type IV
$$
\cD(L)=\{[Z]\in \PP(L\otimes \CC)\,|\, (Z,Z)=0,\ (Z,\bar Z)>0\}^+
$$
where $+$ means a connected component. In \cite{GHSK3}--\cite{GHSsympl}
we studied the geometry of the modular varieties
$$
\cF(L)=\Tilde\Orth^+(L)\backslash \cD(L)
\quad\text{and}\quad  \cS\cF(L)=\Tilde\SO^+(L)\backslash \cD(L).
$$
For $L=L_{2d}$ (see (\ref{L2d})) the variety $\cF_{2d}=\cF(L_{2d})$
is the moduli space of $\Kthree$ surfaces with a polarisation
of degree~$2d$. The variety  $\cS\cF_{2d}$
corresponds to the addition of a spin structure
(see \cite[\S~5]{GHSK3}).

\begin{theorem}\label{K3simplyconnected}
Let $L$ be a lattice with $\sign_\RR(L)=(2,n)$ satisfying the
condition of Theorem \ref{com2}. Then $\cF(L)$ and $\cS\cF(L)$, as
well as any smooth complete model of $\cF(L)$ or $\cS\cF(L)$, are
simply connected. In particular this is true for the moduli spaces
$\cF_{2d}$ and $\cS\cF_{2d}$.
\end{theorem}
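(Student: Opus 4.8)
The plan is to apply Corollary~\ref{zerocondition} to the group $\Gamma=\Tilde\Orth^+(L)$ (and separately to $\Gamma=\Tilde\SO^+(L)$) acting on $\cD(L)$. By that corollary it suffices to show that each of these groups is generated by elements having fixed points in $\cD(L)$. The key observation is that the generators we have already produced are of exactly this type: by Proposition~\ref{generators}, $\Tilde\SO^+(L)=E(L)$ is generated by the Eichler transvections $t(c,a)$ with $c=e$ or $f$ and $a\in L_1$, and $\Tilde\Orth^+(L)=\group{\Gamma^J(L_0),\sigma_1}$; and both transvections and reflections fix positive-dimensional subsets of $\cD(L)$.

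First I would check that every Eichler transvection $t(e,a)\in\Tilde\SO^+(L)$ has a fixed point in $\cD(L)$. From~\eqref{t2}, $t(e,a)$ acts as the identity on $e_V^\perp\cap a_V^\perp$, a subspace of corank at most $2$ in $V=L\otimes\QQ$; since $L$ has signature $(2,n)$ with $n$ large, this subspace still meets the open subset defining $\cD(L)$ (one can choose a point $[Z]$ with $Z$ in the complexification of this rational subspace, having the right signature — here the hypothesis that $L$ contains two hyperbolic planes gives plenty of room). Hence $t(e,a)\in\ker\phi$ by Lemma~\ref{fixed}. Next I would do the same for the reflection $\sigma_1=\sigma_{e_1-f_1}$: its fixed locus is the hyperplane $(e_1-f_1)^\perp$, which has signature $(2,n-1)$ and so meets $\cD(L)$ in a sub-domain; thus $\sigma_1$ also has fixed points. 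Since $\Gamma^J(L_0)$ is generated by transvections (see~\eqref{tSL}--\eqref{tHL}), combining these facts shows both $\Tilde\SO^+(L)$ and $\Tilde\Orth^+(L)$ are generated by elements with fixed points in $\cD(L)$.

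Applying Corollary~\ref{zerocondition} then gives $\pi_1(\cF(L))=\pi_1(\cS\cF(L))=1$ and also $\pi_1(\Tilde X)=1$ for any smooth complete model $\Tilde X$ of either variety, which is the assertion of the theorem; the statement about $\cF_{2d}$ and $\cS\cF_{2d}$ follows because $L_{2d}=2U\oplus 2E_8(-1)\oplus\latt{-2d}$ satisfies the conditions of Theorem~\ref{com2}.

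The step I expect to require the most care is verifying that the relevant rational subspaces (the fixed loci of $t(e,a)$ and of $\sigma_1$) actually intersect the \emph{connected component} $\cD(L)$ non-trivially — i.e.\ that one can realise a fixed point inside the chosen component rather than merely in $\cD(L\otimes\CC)\setminus\{(Z,Z)=0\}$. Because the lattice contains two hyperbolic planes, the orthogonal complement of $e_1-f_1$ (or of $e$ and $a$) still contains a copy of $U$, hence has a two-dimensional positive-definite subspace and a point of the domain lying on it; and since a transvection and the reflection $\sigma_{e-f}$ both lie in $\Orth^+(L)$ by~\eqref{t6} and the definitions in~\eqref{defineorth}, they preserve the component, so a fixed point in $\cD(L\otimes\CC)$ that lies in the closure of the component must lie in the component itself. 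Making this last sign/connectedness bookkeeping precise is the only real content beyond quoting the earlier results.
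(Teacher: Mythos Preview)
Your proposal has a genuine gap: Eichler transvections $t(e,a)$ do \emph{not} have fixed points in $\cD(L)$. A point $[Z]\in\cD(L)$ fixed by the unipotent element $t(e,a)$ would have to satisfy $t(e,a)Z=Z$, hence $Z\in (e^\perp\cap a^\perp)\otimes\CC$. Writing $Z=X+iY$, the conditions $(Z,Z)=0$, $(Z,\bar Z)>0$ force $X,Y$ to span a positive-definite $2$-plane in $e^\perp_\RR$. But $e$ is isotropic, so $e\in e^\perp$ and the form on $e^\perp$ is degenerate with radical $\RR e$; the quotient $e^\perp/\RR e$ has signature $(1,n-1)$. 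Hence $e^\perp_\RR$ contains no positive-definite $2$-plane at all, and there is no such $[Z]$. Your claim that ``the orthogonal complement of $e$ and $a$ still contains a copy of $U$, hence has a two-dimensional positive-definite subspace'' confuses a hyperbolic plane (signature $(1,1)$) with a positive-definite plane.

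The paper circumvents this by using Kneser's theorem (Theorem~\ref{thKn}) and Corollary~\ref{comparespinors} rather than Proposition~\ref{generators}: under the stated hypotheses $\Tilde\SO^+(L)=\Orth'(L)$ is generated by products $\sigma_a\sigma_b$ with $a^2=b^2=-2$, and $\Tilde\Orth^+(L)$ by the $\sigma_a$ themselves. Since each $a$ has negative square, $a^\perp$ has signature $(2,n-1)$ and does meet $\cD(L)$; likewise $a^\perp\cap b^\perp$ has signature $(2,n-2)$ and meets $\cD(L)$. Then Corollary~\ref{zerocondition} applies. The paper does note, immediately after this proof, that transvections also lie in $\ker\phi$ --- but for a different reason (they lie in the unipotent radical of a cusp stabiliser, and one invokes \cite[Theorem~1.5, Corollary~1.6]{Sa}); this boundary argument is what is used for $\cA_t$, where the Kneser conditions fail. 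So your approach via transvections is salvageable, but only by appealing to those results of \cite{Sa}, not via Lemma~\ref{fixed}.
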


\begin{proof}
In view of Corollary~\ref{zerocondition} it is enough to verify that
$\Tilde\Orth^+(L_{2d})$ and $\Tilde\SO^+(L_{2d})$ are generated by
elements having fixed points in $\cD_{L_{2d}}$.  It is easy to see
that $L_{2d}$ satisfies the Kneser conditions.  So by
Theorem~\ref{thKn} and Corollary~\ref{comparespinors},
$\Tilde\SO^+(L_{2d})=\Orth'(L_{2d})$ is generated by products of pairs
of reflections, and $\Tilde\Orth^+(L_{2d})$ is generated by
reflections. Both reflections and the products of two reflections have
fixed points, so the result follows.
\end{proof}

\begin{proposition}\label{enriquessimplyconnected}
The moduli space $\cE$ of Enriques surfaces, and any smooth compactification
of it, are simply-connected.
\end{proposition}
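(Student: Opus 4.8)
The plan is to carry over the method of Section~\ref{section:fundamental}, the new feature being that the Enriques moduli space is not a full arithmetic quotient but the complement of a divisor in one. I would begin from the period description of Horikawa and Namikawa: the coarse moduli space $\cE$ of (unpolarised) Enriques surfaces is isomorphic to $\Gamma\backslash(\cD_\Lambda\setminus\cH)$, where $\Lambda=U\oplus U(2)\oplus E_8(-2)$ is the lattice of signature $(2,10)$ arising as the anti-invariant part of $H^2$ of the K3 double cover, $\Gamma=\Orth^+(\Lambda)$, and $\cH=\bigcup_{\delta^2=-2}\bigl(\delta^\perp\cap\cD_\Lambda\bigr)$ is the arrangement of mirrors of the $(-2)$-vectors (the periods at which the covering involution acquires a fixed point). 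I would also invoke the theorem of Borcherds that $\Orth^+(\Lambda)$ is generated by the reflections $\sigma_v$ with $v^2=-2$ together with those with $v^2=-4$ and $\div(v)=2$.

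Put $\cF=\Gamma\backslash\cD_\Lambda$. Each of the reflections generating $\Gamma$ fixes the non-empty hyperplane section $v^\perp\cap\cD_\Lambda$, so Corollary~\ref{zerocondition} shows at once that $\cF$, and every smooth projective model of it, is simply connected. Now $\cE\hookrightarrow\cF$ is the complement of the divisor $\cZ:=\cH/\Gamma$, which has only finitely many irreducible components, one for each $\Gamma$-orbit of $(-2)$-vectors of $\Lambda$. The open immersion induces a surjection $\pi_1(\cE)\onto\pi_1(\cF)=1$ whose kernel is normally generated by the meridians of the components of $\cZ$. So the whole proof reduces to showing that each of these meridians is trivial in $\pi_1(\cE)$.

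This last point is the crux, and the step I expect to be the main obstacle. My first attempt would use the reflection-group structure of $\Gamma$. A Steinberg-type argument shows that the stabiliser in $\Gamma$ of a point $p\in\cD_\Lambda\setminus\cH$ is generated by the reflections of $\Gamma$ fixing $p$, and since $p$ lies on no $(-2)$-mirror these are all reflections in $(-4)$-vectors; in particular every $(-2)$-reflection $\sigma_\delta$ acts \emph{freely} on $\cD_\Lambda\setminus\cH$, so $\cE$, viewed as an orbifold, has codimension-one orbifold locus only along the images of the $(-4)$-mirrors. Comparing the orbifold $[(\cD_\Lambda\setminus\cH)/\Gamma]$ with $[\cD_\Lambda/\Gamma]$, whose orbifold fundamental group is $\Gamma$, one sees that, in the orbifold fundamental group of $\cE$, a lift $\hat\sigma_\delta$ of each $(-2)$-reflection represents the meridian of the corresponding component of $\cZ$, while every finite local group is generated by lifts of $(-4)$-reflections. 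Passing to the underlying space of $\cE$ kills those local generators, so $\pi_1(\cE)$ is generated by the meridians, and one has to check that the relations of Borcherds' reflection presentation of $\Orth^+(\Lambda)$, with the $(-4)$-reflections set equal to $1$, collapse what remains. A more geometric alternative is to construct, for each component $\cZ_i$ of $\cZ$, a smooth rational curve $C\subset\cF$ with $C\cdot\cZ=1$ (for instance from a one-dimensional sub-period domain chosen so that its image is a rational modular curve meeting the nodal divisor transversally at a single point): then $C\setminus(C\cap\cZ)\cong\mathbb A^1$ is simply connected, the meridian of $\cZ_i$ is the small loop in $C$ around the deleted point, and it bounds the complementary disc in $C$, hence dies in $\pi_1(\cE)$. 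Either way, the real work is in controlling, respectively, the interaction of the $(-2)$- and $(-4)$-reflections in $\Orth^+(\Lambda)$, or the supply of such rational curves.

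Finally, for a smooth complete model: I would take a toroidal compactification $\Bar{\cE}$ of $\cE$ with at worst finite quotient singularities and a resolution $\Tilde{\cE}\to\Bar{\cE}$. Exactly as in the proof of Proposition~\ref{compactifications}, the open immersion into the normal variety $\Bar{\cE}$ and the resolution of its finite quotient singularities each induce surjections on fundamental groups, so $\pi_1(\Tilde{\cE})$ is a quotient of $\pi_1(\cE)=1$ and therefore trivial.
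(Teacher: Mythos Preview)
Your proposal and the paper's proof diverge at the very first step: the paper takes $\cE$ to be the \emph{full} arithmetic quotient $\Orth^+(L)\backslash\cD_L$ with $L=U(2)\oplus U\oplus E_8(-2)$, not the complement of the discriminant. With that reading the argument is short. Since $\rank_2(L)=2$, the Kneser conditions fail for $L$; the paper's device is to observe (following Kondo) that $\Orth^+(L)\backslash\cD_L\cong\Orth^+(L')\backslash\cD_{L'}$ for the rescaled lattice $L'=U\oplus U(2)\oplus E_8(-1)$, which \emph{does} satisfy the Kneser conditions. Then $\Tilde\Orth^+(L')$ is generated by products of $(-2)$-reflections, one passes to $\Orth^+(L')$ by adjoining the reflection swapping the generators of $U(2)$, and Corollary~\ref{zerocondition} finishes.

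Restricted to the full quotient $\cF=\Gamma\backslash\cD_\Lambda$, your route via a Borcherds-type reflection presentation of $\Orth^+(\Lambda)$ is a genuine alternative to the paper's lattice-swap trick: both produce a generating set consisting of elements with fixed points, the paper via Kneser applied to $L'$, you via an external structure theorem for $\Lambda=L$ itself. Each approach imports one nontrivial fact from outside the paper; the paper's has the advantage of staying within the Kneser framework already developed.

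For the open complement $\Gamma\backslash(\cD_\Lambda\setminus\cH)$, however, your proof has a real gap. You correctly isolate the crux as killing the meridians of the discriminant divisor in $\pi_1(\cE)$, but neither of your two sketches is carried through. The orbifold sketch ends with ``one has to check that the relations \dots\ collapse what remains,'' which is exactly the computation that is missing, and there is no a priori reason the normal closure of the $(-4)$-reflections in your presentation should contain the $(-2)$-reflections. The rational-curve sketch presupposes, for each component of $\cZ$, a rational curve in $\cF$ meeting $\cZ$ transversally in a single point and lying otherwise in $\cE$; you do not construct such curves, and their existence is not obvious. So as written your proposal establishes the paper's statement (about the full quotient) by a different method, but does not establish the stronger statement about the open moduli space that you set out to prove; the paper itself makes no claim about that stronger statement.
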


\begin{proof}
This follows from the hard fact that the moduli space of Enriques surfaces
is rational~\cite{KoEnriques}. However, for simply-connectedness we
can give a quick proof using the results above. The moduli space
$\cE$ is associated with the lattice $L=U(2)\oplus U\oplus E_8(-2)$,
which has $2$-rank $2$ and therefore does not satisfy the Kneser
conditions. But $\cE=\Orth^+(L)\backslash\cD_L$ is also
equal to $\Orth^+(L')\backslash\cD_{L'}$, where $L'=U\oplus
U(2)\oplus E_8(-1)$, since $L$ is obtained from $L'$ as the sublattice
of $L'(2)$ of index $4$ where the generators $e$, $f$ of $U(4)$ are
replaced by $e/2$ and $f/2$: see \cite{KoEnriques}.

Since $L'$ does satisfy the Kneser conditions, Theorem~\ref{com1}
tells us that $\Tilde\Orth^+(L')$ is generated by pairs of
reflections, and these have fixed points. But $\Tilde\Orth^+(L')$ is
of index~$2$ in $\Orth^+(L')$, and the reflection that interchanges
the two generators of $U(2)$ is the extra generator that we need. It
also has fixed points in $\cD_{L'}$, so by Lemma~\ref{zerocondition}
we are done.
\end{proof}

Apart from elements with fixed points there are also other elements
in the kernel of $\Gamma\onto \pi_1(\Bar X)$, namely those coming from
the unipotent radical of parabolic subgroups.
By Lemma~\ref{lemma:unique_extension}, a unimodular transvection
$t(e,v)$ is determined by a unimodular isotropic vector, $e^2=0$,
$\div(e)=1$, and by $v\in e^\perp_L$.  Thus $e$ defines a
zero-dimensional cusp of the modular variety
$X=\Tilde\SO^+(L)\setminus \cD_L$.  In other words $t(e,v)$ is an
element of the corresponding parabolic subgroup $P$, and hence it
belongs to the centre of the unipotent radical $U_P$ of $P$.
Different transvections correspond to different $0$-dimensional cusps.
According \cite[Theorem 1.5]{Sa} and \cite[Corollary 1.6]{Sa}, $E(L)$ is
contained in the kernel of the surjection $\phi\colon \Tilde\SO^+(L)
\to \pi_1 (\Tilde X)$.

For the moduli space $\cA_t$ of abelian surfaces with a polarisation
of type $(1,t)$ the lattice that occurs is $\Lambda_{2t}=2U\oplus\latt{-2t}$ and
the group is the paramodular group $\Gamma_{2t}$. As we have seen,
both the Kneser conditions and the conclusions of Theorem~\ref{com1}
fail in this case. However, the results of this paper together with
those of~\cite{Sa} still give us results about the fundamental
groups.
\begin{theorem}\label{Atsimplyconnected}
Any smooth model $\Tilde\cA_t$ of a compactification of $\cA_t$ is
simply-connected.
\end{theorem}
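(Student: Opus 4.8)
The plan is to follow the same strategy as for the $\Kthree$ and Enriques cases, but now the modular group $\Gamma_t$ (equivalently $\Tilde\SO^+(\Lambda_{2t})$ up to the centre) does \emph{not} satisfy the Kneser conditions, so we cannot generate it by elements with fixed points. Instead I would combine Corollary~\ref{zerocondition}-type reasoning for the ``reflective part'' with the observation from \cite{Sa} that Eichler transvections, which come from the unipotent radical of a parabolic, also lie in the kernel of $\phi\colon\Gamma\onto\pi_1(\Tilde X)$. Concretely, write $\Lambda_{2t}=2U\oplus\latt{-2t}$ with the two hyperbolic planes $U=\ZZ e\oplus\ZZ f$, $U_1=\ZZ e_1\oplus\ZZ f_1$, and set $X=\cA_t$ (as an orbifold quotient) and $\Tilde\cA_t$ a smooth projective model of a toroidal compactification. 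By Proposition~\ref{compactifications} there is a surjection $\Gamma_t\onto\pi_1(\Tilde\cA_t)$ factoring through $\phi$, so it suffices to show that $\Gamma_t$ is generated by elements of $\ker\phi$.

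First I would reduce to the orthogonal picture: $\pi_1(\Tilde\cA_t)$ is unchanged if we pass to the projective paramodular group, i.e.\ to $\Tilde\SO^+(\Lambda_{2t})$ (the centre $\{\pm\Eins_4\}$ acts trivially on $\cD$ and is killed in $\pi_1$, being generated by an element with fixed points). Then I would invoke Proposition~\ref{ET}: even though $\Lambda_{2t}$ fails $\rank_2\ge6$, the Eichler criterion (i) and the identities (ii)--(iv) do not use the Kneser rank hypotheses — only the presence of two hyperbolic planes — so $E(\Lambda_{2t})=E_U(L_1)$ is generated by the transvections $t(e,a)$, $t(f,a)$ with $a\in L_1=U_1\oplus\latt{-2t}$. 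Each such $t(c,a)$ with $c$ unimodular isotropic lies in the centre of the unipotent radical of the parabolic attached to the $0$-dimensional cusp defined by $c$, and hence by \cite[Theorem~1.5, Corollary~1.6]{Sa} lies in $\ker\phi$. Therefore $E(\Lambda_{2t})\subseteq\ker\phi$.

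It remains to handle the quotient $\Tilde\SO^+(\Lambda_{2t})/E(\Lambda_{2t})$. Here I would use Proposition~\ref{generators}'s generator $\sigma_1=\sigma_{e_1-f_1}$ together with $\Gamma^J(L_0)$: the argument of Proposition~\ref{generators} shows $\Tilde\SO^+(\Lambda_{2t})=\group{\Gamma^J(L_0),\sigma_1}$ (this part of the proof also only needs two hyperbolic planes, not the rank conditions). The reflection $\sigma_1$ has a fixed-point set in $\cD_{\Lambda_{2t}}$ (a reflection always does), hence $\sigma_1\in\ker\phi$ by Lemma~\ref{fixed}; and $\Gamma^J(L_0)$ is generated by the transvections listed in \eqref{tSL}--\eqref{tHL}, all unimodular, hence already in $\ker\phi$ by the previous paragraph. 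So every generator of $\Tilde\SO^+(\Lambda_{2t})$ lies in $\ker\phi$, forcing $\pi_1(\Tilde\cA_t)=1$.

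The main obstacle is making sure that the parts of Proposition~\ref{ET} and Proposition~\ref{generators} that we need genuinely survive the failure of the Kneser conditions — i.e.\ separating out exactly which statements rely only on ``$L$ contains two hyperbolic planes'' (the generation of $E(L)$ by $t(e,\cdot),t(f,\cdot)$ and the decomposition $\Tilde\SO^+=\group{\Gamma^J,\sigma_1}$) from those that genuinely use $\rank_2,\rank_3$ (the identification $E(L)=\Tilde\SO^+(L)$ via Kneser, which we do \emph{not} need here). A secondary point to check is the precise passage between $\Gamma_t$ on Siegel space and $\Tilde\SO^+(\Lambda_{2t})$ on $\cD_{\Lambda_{2t}}$, so that ``$0$-dimensional orthogonal cusp'' matches ``$0$-dimensional Siegel cusp'' and \cite{Sa} applies verbatim.
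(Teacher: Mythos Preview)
There is a genuine gap in the step ``the argument of Proposition~\ref{generators} shows $\Tilde\SO^+(\Lambda_{2t})=\group{\Gamma^J(L_0),\sigma_1}$ (this part of the proof also only needs two hyperbolic planes, not the rank conditions).'' This is precisely the part of Proposition~\ref{generators} that \emph{does} depend on the Kneser conditions. Look again at the proof of~\eqref{otildeplus}: after listing the transvections in $\Gamma^J(L_0)$, it says ``To have the whole group $\Tilde\SO^{+}(L)=E(L)$ we have to add $t(f,u+xf_1)$\ldots'' --- the equality $\Tilde\SO^+(L)=E(L)$ is being \emph{used}, and that equality comes from~\eqref{SOandE}, which in turn rests on Kneser's Theorem~\ref{thKn}. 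Without the rank hypotheses you only know $E_U(L_1)\subseteq\group{\Gamma^J(L_0),\sigma_1}$, not that this group contains all of $\Tilde\SO^+(\Lambda_{2t})$. (There is also a minor slip: $\sigma_1$ has determinant $-1$, so $\group{\Gamma^J(L_0),\sigma_1}$ is a subgroup of $\Tilde\Orth^+$, not of $\Tilde\SO^+$; but even proving $\Tilde\Orth^+(\Lambda_{2t})=\group{\Gamma^J(L_0),\sigma_1}$ would only give simply-connectedness of the $\Tilde\Orth^+$-quotient, of which $\cA_t$ is a double cover.)

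The paper closes this gap by importing a generating set from the \emph{symplectic} side: the elementary divisor theorem for $\Sp_4$ (see~\cite{Gr1}) gives $\Gamma_t=\group{\Gamma_t^J,\,J_t}$ directly, with no rank hypotheses. Under the isomorphism $\Phi\colon\Gamma_t/\{\pm\Eins\}\to\Tilde\SO^+(\Lambda_{2t})$ from~\cite{GH2}, the Jacobi part maps to $\Gamma^J(\Lambda_{2t})\subset E(\Lambda_{2t})$, and one computes $\Phi(J_t)=\sigma_{e+f}\sigma_{e_1+f_1}$, which lies in $E(\Lambda_{2t})$ by the identity~\eqref{t7} applied with $a=e_1+f_1$. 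This establishes $\Tilde\SO^+(\Lambda_{2t})=E(\Lambda_{2t})$, after which your argument via \cite[Theorem~1.5, Corollary~1.6]{Sa} finishes the proof. So your overall strategy is right, but the missing ingredient is an independent source of generators for $\Tilde\SO^+(\Lambda_{2t})$ replacing Kneser's theorem.
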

\begin{proof}
We cannot apply Proposition~\ref{generators} to  the lattice $\Lambda_{2t}$ 
but the last identity~\eqref{otildeplus} of this proposition 
is still true for $\Lambda_{2t}$.
According to \cite{GH2} there exists an isomorphism 
$\Phi\colon\Gamma_t/\{\pm\Eins\}\to \Tilde\SO^+(\Lambda_{2t})$.
For  the paramodular group~$\Gamma_t$ we have 
$$
\Gamma_t=\group{\Gamma_t^J, J_t}
\quad \text{ where }\quad
J_t=\begin{pmatrix}
0&0&-1&  0\\
0&0& 0&-1/t\\
1&0& 0&  0\\
0&t& 0&  0
\end{pmatrix}
$$
and $\Gamma_t^J$ is the Jacobi subgroup of the paramodular group.
This follows from the elementary divisor theorem for the symplectic
group: see, for example,~\cite{Gr1}.

We know (see \cite{GH2}) that $\Phi(\Gamma_t^J)=\Gamma^J(\Lambda_{2t})$, which 
is generated by transvections (see Subsection~\ref{subsect:jacobi_group}).
Then 
$$
\Phi(J_t)=
\begin{pmatrix}
 0& 0& 0&  0&-1\\
 0& 0& 0& -1& 0\\
 0& 0& 1&  0& 0\\
 0&-1& 0&  0& 0\\
-1& 0& 0&  0& 0
\end{pmatrix}=
\sigma_{e+f}\,\sigma_{e_1+f_1}
$$ 
where we use  notations of  Subsection~\ref{subsect:jacobi_group}.
As in the proof of Proposition 1.6 we see that $\Phi(J_t)$ is a transvection.
Therefore 
\begin{equation}\label{last}
\Tilde\SO^+(\Lambda_{2t})=E(\Lambda_{2t}),\qquad
\Tilde\Orth^+(\Lambda_{2t})=\group{\Gamma^J(\Lambda_{2t}), \sigma_{e_1-f_1}}
\end{equation}
\end{proof}
In \cite[Theorem 3.4]{Sa} it was proved that $\Tilde\cA_p$ is
simply-connected for any odd prime $p$. Also in \cite{Sa} one may find
examples of locally symmetric varieties that are not
simply-connected. However, in all these cases one has, in particular,
that the fundamental group is finite and therefore
the irregularity is zero.

In a similar way, by combining the results of~\cite{Sa} and those of
Subsection~\ref{subsect:numberfields}, one can prove that some Shimura
varieties (considered as complex manifolds) are simply-connected.

\bibliographystyle{alpha}

\bigskip

\noindent
V.A.~Gritsenko\\
Universit\'e Lille 1\\
Laboratoire Paul Painlev\'e\\
F-59655 Villeneuve d'Ascq, Cedex\\
France\\
{\tt valery.gritsenko@math.univ-lille1.fr}
\bigskip

\noindent
K.~Hulek\\
Institut f\"ur Algebraische Geometrie\\
Leibniz Universit\"at Hannover\\
D-30060 Hannover\\
Germany\\
{\tt hulek@math.uni-hannover.de}
\bigskip

\noindent
G.K.~Sankaran\\
Department of Mathematical Sciences\\
University of Bath\\
Bath BA2 7AY\\
England\\
{\tt gks@maths.bath.ac.uk}
\end{document}